\renewcommand{\uppercasenonmath}[1]{}
\numberwithin{equation}{section} \theoremstyle{plain}
\newtheorem*{thm*}{Main Theorem}
\newtheorem{thm}{Theorem}[section]
\newtheorem{cor}[thm]{Corollary}
\newtheorem*{cor*}{Corollary}
\newtheorem{lem}[thm]{Lemma}
\newtheorem*{lem*}{Lemma}
\newtheorem*{fact*}{Fact}
\newtheorem*{nota*}{Notation}
\newtheorem{prop}[thm]{Proposition}
\newtheorem*{prop*}{Proposition}
\newtheorem*{rem*}{Remark}
\newtheorem*{observation*}{Observation}
\newtheorem*{exa*}{Example}
\newtheorem*{df*}{Definition}
\newtheorem*{con*}{Construction}
\renewcommand{\geq}{\geqslant}
\renewcommand{\leq}{\leqslant}
\begin{document}
\begin{center}
{\large  \bf  Modules cofinite with respect to ideals of small dimensions}

\vspace{0.5cm} Xiaoyan Yang and Jingwen Shen\\
%\bigskip
Department of Mathematics, Northwest Normal University, Lanzhou 730070,
China
E-mails: yangxy@nwnu.edu.cn and 3284541957@qq.com
\end{center}

\bigskip
\centerline { \bf  Abstract}
%\bigskip
\leftskip10truemm \rightskip10truemm \noindent Let $\mathfrak{a}$ be an ideal of a noetherian (not necessarily local) ring $R$ and $M$ an $R$-module with $\mathrm{Supp}_RM\subseteq\mathrm{V}(\mathfrak{a})$. We show that if $\mathrm{dim}_RM\leq2$, then $M$ is $\mathfrak{a}$-cofinite if and only if $\mathrm{Ext}^i_R(R/\mathfrak{a},M)$ are finitely generated for all $i\leq 2$, which generalizes one of the main results in [Algebr. Represent. Theory 18 (2015) 369--379]. Some new results concerning cofiniteness of local cohomology modules $\mathrm{H}^i_\mathfrak{a}(M)$ for any finitely generated $R$-module $M$ are obtained.\\
\vbox to 0.3cm{}\\
{\it Key Words:} cofinite module; local cohomology\\
{\it 2020 Mathematics Subject Classification:} 13D45; 13E05.

\leftskip0truemm \rightskip0truemm
\bigskip
\section* { \bf Introduction and Preliminaries}
%\bigskip
Throughout this paper, $R$ is a commutative noetherian ring with identity and $\mathfrak{a}$ an ideal of $R$. For an $R$-module $M$, the $i$th local cohomology of $M$ with respect to $\mathfrak{a}$ is defined as
 \begin{center}$\mathrm{H}^i_\mathfrak{a}(M)=\underrightarrow{\textrm{lim}}_{t>0}\mathrm{Ext}^i_R(R/\mathfrak{a}^t,M)$.\end{center}
The reader can refer to \cite{BS} or \cite{Gr} for more details about local cohomology.

 It is a well-known result that if $(R,\mathfrak{m})$ is a local ring, then
the $R$-module $M$ is artinian if and only if $\mathrm{Supp}_RM\subseteq\{\mathfrak{m}\}$ and $\mathrm{Ext}^i_R(R/\mathfrak{m},M)$ are finitely generated for all $i\geq 0$ (see \cite[Proposition 1.1]{H}).
 In 1968, Grothendieck  \cite{G} conjectured that for any finitely generated $R$-module $M$, the $R$-module $\mathrm{Hom}_R(R/\mathfrak{a},\mathrm{H}^i_\mathfrak{a}(M))$
are finitely generated for
all $i$. One year later, Hartshorne \cite{H} provided a counterexample to show that this conjecture
is false even when $R$ is regular, and then he defined an $R$-module $M$ to be $\mathfrak{a}$-cofinite
if $\mathrm{Supp}_RM\subseteq\mathrm{V}(\mathfrak{a})$ and $\mathrm{Ext}^i_R(R/\mathfrak{a},M)$ is finitely generated for all $i\geq 0$,  and
asked:

\vspace{2mm} \noindent{\bf Question 1.}\label{Th1.4} {\it{Are the local cohomology modules $\mathrm{H}^i_\mathfrak{a}(M)$, $\mathfrak{a}$-cofinite for every finitely generated
$R$-module $M$ and every $i\geq 0$?}}

\vspace{2mm} \noindent{\bf Question 2.}\label{Th1.4} {\it{Is the category $\mathcal{M}(R,\mathfrak{a})_{cof}$ of $\mathfrak{a}$-cofinite $R$-modules an abelian subcategory of the
category $\mathcal{M}(R)$ of $R$-modules?}}
\vspace{2mm}

If the module $\mathrm{H}^i_\mathfrak{a}(M)$ is $\mathfrak{a}$-cofinite, then the set $\mathrm{Ass}_R\mathrm{H}^i_\mathfrak{a}(M)$
of associated primes
and the Bass numbers $\mu^j_{R}(\mathfrak{p},\mathrm{H}^i_\mathfrak{a}(M))$
are finite for all $\mathfrak{p}\in\mathrm{Spec}R$ and $i,j\geq0$. This observation reveals the utmost significance of Question 1.
In the following years, Question 1 were systematically
studied and improved by commutative algebra practitioners in several stages.  For example, see \cite{BN,BNS1,DM,K}.

With respect to the Question 2, Hartshorne with an example showed that this is not true
in general. However, he proved that if $R$ is a
complete regular local ring and $\mathfrak{a}$ a prime ideal with $\mathrm{dim}R/\mathfrak{a}=1$, then the answer to his question is yes. Delfino and Marley \cite{DM}
extended this result to arbitrary complete local rings. Kawasaki \cite{K} generalized the Delfino and Marley's result for an arbitrary
ideal $\mathfrak{a}$ of dimension one in a local ring $R$ by using a spectral sequence argument. In 2014, Bahmanpour et al. \cite{BNS} proved that
Question 2 is true for the category of all $\mathfrak{a}$-cofinite $R$-modules $M$ with $\mathrm{dim}_RM\leq1$
 for all ideals $\mathfrak{a}$ of $R$. The proof of this result is based
on \cite[Proposition 2.6]{BNS} which states that an $R$-module
$M$ with $\mathrm{dim}_RM\leq1$ and $\mathrm{Supp}_RM\subseteq\mathrm{V}(\mathfrak{a})$ is $\mathfrak{a}$-cofinite if and only if
$\mathrm{Hom}_R(R/\mathfrak{a},M)$ and $\mathrm{Ext}^1_R(R/\mathfrak{a},M)$ are finitely generated. Recently, Bahmanpour et al.  \cite{BNS1} extended the above result to the
ideals $\mathfrak{a}$ of dimension two, i.e. $\mathrm{dim}R/\mathfrak{a}=2$, in a local ring $R$.

One of the main goals of present paper is to generalize the Bahmanpour, Naghipour and Sedghi's results to not necessarily local rings. More precisely, we shall prove that:

\vspace{2mm} \noindent{\bf Theorem.}\label{Th1.4} {\it{Let $\mathfrak{a}$ be an ideal of $R$ with $\mathrm{dim}R/\mathfrak{a}\leq2$. Then an $\mathfrak{a}$-torsion $R$-module $M$ is $\mathfrak{a}$-cofinite if and only if $\mathrm{Ext}^i_R(R/\mathfrak{a},M)$ are finitely
generated for all $i\leq2$.}}
\vspace{2mm}

We also answer Question 1 completely in the case
$\mathrm{cd}(\mathfrak{a})\leq2$, and study
 the cofiniteness of modules in $FD_{\leq n}$ for $n=0,1,2$.

Next we recall some notions which we will need later.

We write $\mathrm{Spec}R$ for the set of
prime ideals of $R$ and $\mathrm{Max}R$ for the set of
maximal ideals of $R$, For an ideal $\mathfrak{a}$ in $R$, we set
\begin{center}$\mathrm{V}(\mathfrak{a})=\{\mathfrak{p}\in\textrm{Spec}R\hspace{0.03cm}|\hspace{0.03cm}\mathfrak{a}\subseteq\mathfrak{p}\}$.
\end{center}

Let $M$ be an $R$-module. The set $\mathrm{Ass}_RM$ of associated prime of $M$ is
the set of prime ideals $\mathfrak{p}$ of $R$
such that there exists a cyclic submodule $N$ of $M$ such that $\mathfrak{p}=\mathrm{Ann}_RN$, the annihilator of $N$. The support of an $R$-module $M$ is the set \begin{center}$\mathrm{Supp}_RM=\{\mathfrak{p}\in\mathrm{Spec}R\hspace{0.03cm}|\hspace{0.03cm}
M_\mathfrak{p}\neq0\}$.\end{center}
Recall that a prime ideal $\mathfrak{p}$ of $R$ is said to be an attached prime
of $M$ if $\mathfrak{p}=\mathrm{Ann}_RM/L$ for some submodule $L$ of $M$ (see \cite{MS}). The set of attached
primes of $M$ is denoted by $\mathrm{Att}_RM$. If $M$ is artinian, then $M$ admits a
minimal secondary representation $M=M_1+\cdots+M_r$ so that $M_i$
is $\mathfrak{p}_i$-secondary for $i=1,\cdots,r$. In this case, $\mathrm{Att}_RM=\{\mathfrak{p}_1,\cdots,\mathfrak{p}_r\}$.

The arithmetic
rank of the ideal $\mathfrak{a}$, denoted by
$\mathrm{ara}(\mathfrak{a})$, is the least number of elements of $R$ required to generate an ideal which has
the same radical as $\mathfrak{a}$, i.e.,
\begin{center}$\mathrm{ara}(\mathfrak{a})=\mathrm{min}\{n\geq0\hspace{0.03cm}|\hspace{0.03cm}\exists\ a_1,\cdots,a_n\in R\ \textrm{with}\ \mathrm{Rad}(a_1,\cdots,a_n)=\mathrm{Rad}(\mathfrak{a})\}$.\end{center} The arithmetic rank of $\mathfrak{a}$ in $R$ with respect
to an $R$-module $M$, denoted by $\mathrm{ara}_M(\mathfrak{a})$, is defined by the arithmetic rank of the ideal $\mathfrak{a}+
\mathrm{Ann}_RM/\mathrm{Ann}_RM$ in the ring $R/\mathrm{Ann}_RM$.

For each
$R$-module $M$, set
\begin{center}$\mathrm{cd}(\mathfrak{a},M)=\mathrm{sup}\{n\in\mathbb{Z}\hspace{0.03cm}|\hspace{0.03cm}\mathrm{H}_\mathfrak{a}^n(M)\neq0\}$.\end{center}
The cohomological dimension of the ideal $\mathfrak{a}$ is
\begin{center}$\mathrm{cd}(\mathfrak{a})=\mathrm{sup}\{\mathrm{cd}(\mathfrak{a},M)\hspace{0.03cm}|\hspace{0.03cm}M\ \textrm{is\ an}\ R\textrm{-module}\}$.\end{center}

\bigskip
\section{\bf Main resulats}
One of the main results of this work is
examine the cofiniteness of modules with respect to ideals of dimension 2 in an arbitrary noetherian ring,
which is a generalization of \cite[Proposition 2.6]{BNS} and \cite[Theorem 3.5]{BNS1}.

\begin{thm}\label{lem:2.1}{\it{Let $M$ be an $\mathfrak{a}$-torsion $R$-module with $\mathrm{dim}_RM\leq2$. Then
 $M$ is $\mathfrak{a}$-cofinite if and only if $\mathrm{Hom}_R(R/\mathfrak{a},M)$, $\mathrm{Ext}^1_R(R/\mathfrak{a},M)$ and $\mathrm{Ext}^2_R(R/\mathfrak{a},M)$ are finitely generated.}}
\end{thm}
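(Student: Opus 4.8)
The plan is the following. The \emph{only if} direction is immediate, since $\mathfrak a$-cofiniteness of $M$ asserts precisely that $\ext^i_R(R/\mathfrak a,M)$ is finitely generated for \emph{every} $i\geq0$. So assume conversely that $\Hom_R(R/\mathfrak a,M)$, $\ext^1_R(R/\mathfrak a,M)$ and $\ext^2_R(R/\mathfrak a,M)$ are finitely generated; the goal is to deduce that $\ext^i_R(R/\mathfrak a,M)$ is finitely generated for all $i$. First I would observe that $\ass_R M$ is finite: as $M$ is $\mathfrak a$-torsion one has $\ass_R M=\ass_R\Hom_R(R/\mathfrak a,M)$, and the latter module is finitely generated. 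Hence $\supp_R M=\bigcup_{\mathfrak p\in\ass_R M}\mathrm V(\mathfrak p)$ is closed and has only finitely many minimal primes, so only finitely many $\mathfrak p\in\supp_R M$ have $\dim R/\mathfrak p=2$; list them as $\mathfrak p_1,\dots,\mathfrak p_n$ (each lies in $\ass_R M$). If $\dim_R M\leq1$ the statement is exactly \cite[Proposition~2.6]{BNS}, whose hypotheses we are assuming, so I may assume $\dim_R M=2$.

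Next, using prime avoidance I would pick $x\in R$ with $x\notin\mathfrak p_j$ for $j=1,\dots,n$. Then $\ass_R(0:_M x)\subseteq\{\mathfrak p\in\ass_R M\mid x\in\mathfrak p\}$ and $\supp_R(M/xM)=\supp_R M\cap\mathrm V(x)$ contain no prime of coheight $2$, so $(0:_M x)$ and $M/xM$ are $\mathfrak a$-torsion modules of dimension $\leq1$. I claim both are $\mathfrak a$-cofinite, and here the case $\dim\leq1$ is the tool. From $0\to(0:_M x)\to M\to xM\to0$ one gets $\Hom_R(R/\mathfrak a,(0:_M x))\subseteq\Hom_R(R/\mathfrak a,M)$, while in the associated long exact sequence $\ext^1_R(R/\mathfrak a,(0:_M x))$ is an extension of a submodule of the finitely generated module $\ext^1_R(R/\mathfrak a,M)$ by a quotient of $\Hom_R(R/\mathfrak a,xM)\subseteq\Hom_R(R/\mathfrak a,M)$; thus it is finitely generated and \cite[Proposition~2.6]{BNS} applies to $(0:_M x)$. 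Feeding this back into the same long exact sequence, and now using that $\ext^i_R(R/\mathfrak a,M)$ is finitely generated for $i\leq2$, one finds $\ext^i_R(R/\mathfrak a,xM)$ finitely generated for $i\leq2$; then the long exact sequence of $0\to xM\to M\to M/xM\to0$ shows $\Hom_R(R/\mathfrak a,M/xM)$ and $\ext^1_R(R/\mathfrak a,M/xM)$ are finitely generated, so $M/xM$ is $\mathfrak a$-cofinite by \cite[Proposition~2.6]{BNS}. (It is precisely here that the hypothesis on $\ext^2_R(R/\mathfrak a,M)$ enters.)

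The last, and decisive, step is to pass from the $\mathfrak a$-cofiniteness of $(0:_M x)$ and $M/xM$ to that of $M$, and I expect this to be the main obstacle. It is not formal: since each $\mathfrak p_j$ contains $\mathfrak a$ we are forced to take $x\notin\mathfrak a$, so Melkersson's criterion—which would finish the proof at once if $x$ lay in $\mathfrak a$—is unavailable, and the two long exact sequences above by themselves only yield the \emph{equivalence} ``$\ext^i_R(R/\mathfrak a,M)$ finitely generated $\iff$ $\ext^i_R(R/\mathfrak a,xM)$ finitely generated''. To break this, I would use the behaviour of $M$ at the finitely many top-dimensional primes: for each $j$, $\Hom_R(R/\mathfrak p_j,M)\subseteq\Hom_R(R/\mathfrak a,M)$ is finitely generated and $\mathfrak p_j$ is minimal in $\supp_R M$, so $M_{\mathfrak p_j}$ is a $\mathfrak p_jR_{\mathfrak p_j}$-torsion $R_{\mathfrak p_j}$-module with finitely generated socle, hence artinian; its Matlis dual is then of finite length, so each $\ext^i_{R_{\mathfrak p_j}}(R_{\mathfrak p_j}/\mathfrak aR_{\mathfrak p_j},M_{\mathfrak p_j})$ is of finite length. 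Combining this local information at the $\mathfrak p_j$ with the $\mathfrak a$-cofiniteness of $(0:_M x)$ and $M/xM$—after the customary reduction to a complete local ground ring, where one can pass to Matlis duals and use the isomorphism $\ext^i_R(R/\mathfrak a,M)^{\vee}\cong\tor^R_i(R/\mathfrak a,M^{\vee})$ available for $\mathfrak a$-torsion $M$—should give the finite generation of $\ext^i_R(R/\mathfrak a,M)$ for every $i$, completing the argument. The delicate points I anticipate are the reduction to the complete local case for a not-necessarily-local $R$ and the bookkeeping required to glue the local statements together.
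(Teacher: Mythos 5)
Your reduction to $\dim_RM=2$, the finiteness of the set $T_M=\{\mathfrak p_1,\dots,\mathfrak p_n\}$ of top-dimensional support primes, and the verification that $(0:_Mx)$ and $M/xM$ are $\mathfrak a$-cofinite of dimension $\le 1$ are all sound. But the proof fails exactly at the point you yourself flag as the ``decisive step'': since every $\mathfrak p_j$ contains $\mathfrak a$ (as $M$ is $\mathfrak a$-torsion), your element $x$ avoiding the $\mathfrak p_j$ cannot lie in $\mathfrak a$, Melkersson's criterion is indeed unavailable, and the two long exact sequences only circle back to ``$\mathrm{Ext}^i_R(R/\mathfrak a,M)$ f.g.\ $\Leftrightarrow$ $\mathrm{Ext}^i_R(R/\mathfrak a,xM)$ f.g.'' The repair you sketch --- Matlis duality after ``the customary reduction to a complete local ground ring'' --- is not an argument: finite generation of $\mathrm{Ext}^i_R(R/\mathfrak a,M)$ over a non-local $R$ cannot be checked at the finitely many primes $\mathfrak p_j$ (it is not a local condition), there is no completion available globally, and the finite-length statements for $\mathrm{Ext}^i_{R_{\mathfrak p_j}}(R_{\mathfrak p_j}/\mathfrak aR_{\mathfrak p_j},M_{\mathfrak p_j})$ say nothing about the behaviour of $M$ at the infinitely many primes of coheight $\le 1$ in its support. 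So the proposal is incomplete where it matters most.

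The paper escapes this trap by never trying to avoid the $\mathfrak p_j$ themselves. Instead it uses that each $M_{\mathfrak p_j}$ is \emph{artinian} (this follows from the finite generation of $\mathrm{Hom}_R(R/\mathfrak a,M)$ and \cite[Proposition 4.1]{LM}) and applies prime avoidance, via \cite[Ex.16.8]{M}, to the set of \emph{attached primes} of the $M_{\mathfrak p_j}$ that do not contain $\mathfrak a$. This produces elements $x_1,\dots,x_t$ lying \emph{inside} $\mathfrak a$, with $t=\mathrm{ara}_M(\mathfrak a)$, such that $\mathrm{ara}_{M_i}(\mathfrak a)\le t-i$ for $M_i=(0:_{M_{i-1}}x_i)$; the induction is on arithmetic rank, not on dimension. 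The submodule $M_t=(0:_M(x_1,\dots,x_t))$ is then finitely generated (arithmetic rank $0$), while the attached-prime avoidance forces $(M/(x_1,\dots,x_t)M)_{\mathfrak q_j}$ to have finite length at every coheight-$2$ prime $\mathfrak q_j$ (by \cite[Lemma 2.4]{BN}), so that $M/(x_1,\dots,x_t)M$ is, modulo a finitely generated submodule $L$, of dimension $\le 1$ and hence $\mathfrak a$-cofinite by \cite[Proposition 2.6]{BNS}. Because the $x_i$ lie in $\mathfrak a$, Melkersson's criterion \cite[Corollary 3.3]{LM} now applies and finishes the proof. If you want to salvage your write-up, the attached-prime avoidance inside $\mathfrak a$ is the missing idea you need to import.
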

\begin{proof} ``Only if'' part is obvious.

``If'' part. By \cite[Proposition 2.6]{BNS} we may assume $\mathrm{dim}_RM=2$ and let
$t=\mathrm{ara}_M(\mathfrak{a})$. If $t=0$, then $\mathfrak{a}^n\subseteq\mathrm{Ann}_RM$ for
some $n\geq0$ by definition, and so $M=(0:_M\mathfrak{a}^n)$ is finitely generated by \cite[Lemma 2.1]{ANS} and the assertion follows. Next, assume that $t>0$. Let\begin{center}
$T_M=\{\mathfrak{p}\in\mathrm{Supp}_RM\hspace{0.03cm}|\hspace{0.03cm}\mathrm{dim}R/\mathfrak{p}=2\}$.
\end{center}
As $\mathrm{Ass}_R\mathrm{Hom}_R(R/\mathfrak{a},M)=\mathrm{Ass}_RM$ is finite, the set $T_M$ is finite. Also for each $\mathfrak{p}\in T_M$, the $R_\mathfrak{p}$-module $\mathrm{Hom}_{R_\mathfrak{p}}(R_\mathfrak{p}/\mathfrak{a}R_\mathfrak{p},M_\mathfrak{p})$ is finitely generated and $M_\mathfrak{p}$ is an $\mathfrak{a}R_\mathfrak{p}$-torsion $R_\mathfrak{p}$-module with $\mathrm{Supp}_{R_\mathfrak{p}}M_\mathfrak{p}\subseteq\mathrm{V}(\mathfrak{p}R_\mathfrak{p})$, it follows from \cite[Proposition 4.1]{LM}
that $M_\mathfrak{p}$ is an artinian $\mathfrak{a}R_\mathfrak{p}$-torsion $R_\mathfrak{p}$-module. Let
$T_M=\{\mathfrak{p}_1,\cdots\mathfrak{,p}_n\}$.
It follows from \cite[Lemma 2.5]{BN} that $\mathrm{V}(\mathfrak{a}R_{\mathfrak{p}_j})\cap\mathrm{Att}_{R_{\mathfrak{p}_j}}M_{\mathfrak{p}_j}
\subseteq\mathrm{V}(\mathfrak{p}_jR_{\mathfrak{p}_j})$ for $j=1,\cdots,n$. Let \begin{center}
$\mathrm{U}_M=\bigcup_{j=1}^n\{\mathfrak{q}\in\mathrm{Spec}R\hspace{0.03cm}|\hspace{0.03cm}\mathfrak{q}R_{\mathfrak{p}_j}
\in\mathrm{Att}_{R_{\mathfrak{p}_j}}M_{\mathfrak{p}_j}\}$.
\end{center}Then $\mathrm{U}_M\cap\mathrm{V}(\mathfrak{a})\subseteq T_M$. Since $t=\mathrm{ara}_M(\mathfrak{a})\geq 1$, there exist $y_1,\cdots,y_t\in\mathfrak{a}$
such that\begin{center}
$\mathrm{Rad}(\mathfrak{a}+\mathrm{Ann}_RM/\mathrm{Ann}_RM)=\mathrm{Rad}((y_1,\cdots,y_t)+\mathrm{Ann}_RM/\mathrm{Ann}_RM)$.
\end{center}Since $\mathfrak{a}\nsubseteq\bigcup_{\mathfrak{q}\in\mathrm{U}_M\backslash\mathrm{V}(\mathfrak{a})}\mathfrak{q}$, it follows that $(y_1,\cdots,y_t)+\mathrm{Ann}_RM\nsubseteq\bigcup_{\mathfrak{q}\in\mathrm{U}_M\backslash\mathrm{V}(\mathfrak{a})}\mathfrak{q}$.
On the other hand, for each $\mathfrak{q}\in\mathrm{U}_M$ we have $\mathfrak{q}R_{\mathfrak{p}_j}
\in\mathrm{Att}_{R_{\mathfrak{p}_j}}M_{\mathfrak{p}_j}$ for some $1\leq j\leq n$. Thus
\begin{center}
$(\mathrm{Ann}_RM)R_{\mathfrak{p}_j}\subseteq\mathrm{Ann}_{R_{\mathfrak{p}_j}}M_{\mathfrak{p}_j}\subseteq \mathfrak{q}R_{\mathfrak{p}_j}$,
\end{center}and so $\mathrm{Ann}_RM\subseteq\mathfrak{q}$. Consequently, $(y_1,\cdots,y_t)\nsubseteq\bigcup_{\mathfrak{q}\in\mathrm{U}_M\backslash\mathrm{V}(\mathfrak{a})}\mathfrak{q}$ as $\mathrm{Ann}_RM\subseteq\bigcap_{\mathfrak{q}\in\mathrm{U}_M\backslash\mathrm{V}(\mathfrak{a})}\mathfrak{q}$. Hence \cite[Ex.16.8]{M}
provides an element $a_1\in(y_2,\cdots,y_t)$ such that $y_1+a_1\not\in\bigcup_{\mathfrak{q}\in\mathrm{U}_M\backslash\mathrm{V}(\mathfrak{a})}\mathfrak{q}$. Set $x_1=y_1+a_1$. Then $x_1\in\mathfrak{a}$ and
\begin{center}
$\mathrm{Rad}(\mathfrak{a}+\mathrm{Ann}_RM/\mathrm{Ann}_RM)=\mathrm{Rad}((x_1,y_2,\cdots,y_t)+\mathrm{Ann}_RM/\mathrm{Ann}_RM)$.
\end{center}Let $M_1=(0:_Mx_1)$. Then $\mathrm{ara}_{M_1}(\mathfrak{a})=
\mathrm{Rad}((y_2,\cdots,y_t)+\mathrm{Ann}_RM_1/\mathrm{Ann}_RM_1)\leq t-1$ as $x_1\in\mathrm{Ann}_RM_1$. The reasoning in the preceding  applied to $M_1$, there exists $x_2\in\mathfrak{a}$ such that $\mathrm{ara}_{M_2}(\mathfrak{a})=
\mathrm{Rad}((y_3,\cdots,y_t)+\mathrm{Ann}_RM_2/\mathrm{Ann}_RM_2)\leq t-2$.
 Continuing this process, one obtains elements $x_1,\cdots,x_t\in\mathfrak{a}$ and the sequences
\begin{center}
$0\rightarrow M_{i}\rightarrow M_{i-1}\rightarrow x_iM_{i-1}\rightarrow0$,
\end{center}where $M_{0}=M$ and $M_{i}=(0:_{M_{i-1}}x_i)$ such that $\mathrm{ara}_{M_i}(\mathfrak{a})\leq t-i$ for $i=1,\cdots,t$, and these
exact sequences induce an exact sequence
\begin{center}
$0\rightarrow M_t\rightarrow M\rightarrow (x_1,\cdots,x_t)M\rightarrow0$.
\end{center}As $\mathrm{Hom}_R(R/\mathfrak{a},M_t)$ is finitely generated and $\mathrm{ara}_{M_t}(\mathfrak{a})=0$, one has $M_t$ is $\mathfrak{a}$-cofinite. Moreover, the above sequence implies that
$\mathrm{Ext}^1_R(R/\mathfrak{a},(x_1,\cdots,x_t)M)$ and $\mathrm{Ext}^2_R(R/\mathfrak{a},(x_1,\cdots,x_t)M)$ are finitely generated. Also the exact sequence \begin{center}$0\rightarrow (x_1,\cdots,x_t)M\rightarrow M\rightarrow M/(x_1,\cdots,x_t)M\rightarrow0$ \end{center} yields that $\mathrm{Hom}_R(R/\mathfrak{a},M/(x_1,\cdots,x_t)M)$ and $\mathrm{Ext}^1_R(R/\mathfrak{a},M/(x_1,\cdots,x_t)M)$ are finitely generated. On the other hand, for $i=1,\cdots,t$, the exact sequence \begin{center}
$0\rightarrow x_iM_{i-1}\rightarrow (x_1,\cdots,x_i)M\rightarrow (x_1,\cdots,x_{i-1})M\rightarrow0$
\end{center}induces the following commutative diagram
\begin{center}$\xymatrix@C=23pt@R=20pt{
     &  0\ar[d]  & 0 \ar[d] &  \\
       0\ar[r]&x_iM_{i-1}\ar[r] \ar[d]& M_{i-1}\ar[d] \ar[r]& M_{i-1}/x_iM_{i-1}\ar[d]^\cong\ar[r]&0\\
      0 \ar[r] &(x_1,\cdots,x_i)M \ar[d] \ar[r] &M  \ar[d]\ar[r] &M/(x_1,\cdots,x_i)M  \ar[r] & 0  \\
     &    (x_1,\cdots,x_{i-1})M  \ar[d]\ar@{=}[r] & (x_1,\cdots,x_{i-1})M\ar[d]  \\
    &  0& \hspace{0.15cm}0.  &}$
\end{center}Let $T_{M_{t-1}}=\{\mathfrak{p}\in\mathrm{Supp}_RM_{t-1}\hspace{0.03cm}|\hspace{0.03cm}\mathrm{dim}R/\mathfrak{p}=2\}
=\{\mathfrak{q}_{1},\cdots,\mathfrak{q}_{m}\}$. Since \begin{center}$\mathrm{Hom}_R(R/\mathfrak{a},M_{t-1}/x_tM_{t-1})\cong\mathrm{Hom}_R(R/\mathfrak{a},M/(x_1,\cdots,x_t)M)$\end{center} is finitely generated, it follows from \cite[Lemma 2.4]{BN} that $(M_{t-1}/x_tM_{t-1})_{\mathfrak{q}_{j}}$ has finite length for $j=1,\cdots,m$. Thus there is a finitely generated submodule $L_{j}$ of $M/(x_1,\cdots,x_t)M$ such that $(M/(x_1,\cdots,x_t)M)_{\mathfrak{q}_{j}}=(L_{j})_{\mathfrak{q}_{j}}$. Set $L=L_{1}+\cdots+L_{m}$. Then $L$ is a finitely generated submodule of $M/(x_1,\cdots,x_t)M$ so that $\mathrm{Supp}_R(M/(x_1,\cdots,x_t)M)/L\subseteq\mathrm{Supp}_RM_{t-1}\backslash\{\mathfrak{q}_{1},\cdots,\mathfrak{q}_{m}\}$ and hence $\mathrm{dim}_R(M/(x_1,\cdots,x_t)M)/L\leq1$. Now the exact sequence \begin{center}$0\rightarrow L\rightarrow M/(x_1,\cdots,x_t)M\rightarrow (M/(x_1,\cdots,x_t)M)/L\rightarrow0$\end{center}induces the following exact sequence
\begin{center}
$\mathrm{Hom}_{R}(R/\mathfrak{a},M/(x_1,\cdots,x_t)M)\rightarrow
\mathrm{Hom}_{R}(R/\mathfrak{a},(M/(x_1,\cdots,x_t)M)/L)\rightarrow\mathrm{Ext}^1_{R}(R/\mathfrak{a},L)\rightarrow
\mathrm{Ext}^1_{R}(R/\mathfrak{a},M/(x_1,\cdots,x_t)M)\rightarrow\mathrm{Ext}^1_{R}(R/\mathfrak{a},(M/(x_1,\cdots,x_t)M)/L)
\rightarrow\mathrm{Ext}^2_{R}(R/\mathfrak{a},L)$.\end{center} Hence $\mathrm{Hom}_{R}(R/\mathfrak{a},(M/(x_1,\cdots,x_t)M)/L)$ and $\mathrm{Ext}^1_{R}(R/\mathfrak{a},(M/(x_1,\cdots,x_t)M)/L)$ are finitely generated, and so $(M/(x_1,\cdots,x_t)M)/L$ is $\mathfrak{a}$-cofinite by \cite[Proposition 2.6]{BNS}. Consequently, $M/(x_1,\cdots,x_t)M$ is $\mathfrak{a}$-cofinite. As $M_t\cong\mathrm{Hom}_{R}(R/(x_1,\cdots,x_t)R,M)$ and $M/(x_1,\cdots,x_t)M$ are $\mathfrak{a}$-cofinite, it follows from \cite[Corollary 3.3]{LM} that $M$ is $\mathfrak{a}$-cofinite, as desired.
\end{proof}

\begin{cor}\label{lem:2.0}{\it{Let $\mathfrak{a}$ be a proper ideal of $R$ with $\mathrm{dim}R/\mathfrak{a}\leq2$ and $M$ an $R$-module such that $\mathrm{Supp}_RM\subseteq\mathrm{V}(\mathfrak{a})$. Then the
 following are equivalent:

$(1)$ $M$ is $\mathfrak{a}$-cofinite;

$(2)$ $\mathrm{Hom}_R(R/\mathfrak{a},M)$, $\mathrm{Ext}^1_R(R/\mathfrak{a},M)$ and $\mathrm{Ext}^2_R(R/\mathfrak{a},M)$ are finitely generated.}}
\end{cor}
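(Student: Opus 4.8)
The plan is to read the Corollary off from Theorem~\ref{lem:2.1} by translating the hypothesis $\mathrm{Supp}_RM\subseteq\mathrm{V}(\mathfrak{a})$ into the torsion hypothesis of that theorem. The equivalence of $(1)$ and $(2)$ for an $\mathfrak{a}$-torsion $R$-module of dimension at most $2$ is exactly the content of Theorem~\ref{lem:2.1}, so the whole argument reduces to two elementary observations.

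First I would note that, because $R$ is noetherian, the condition $\mathrm{Supp}_RM\subseteq\mathrm{V}(\mathfrak{a})$ says precisely that $M$ is $\mathfrak{a}$-torsion. Indeed, writing $M$ as the directed union of its finitely generated submodules $M_\lambda$ gives $\mathrm{Supp}_RM=\bigcup_\lambda\mathrm{Supp}_RM_\lambda$, and for each finitely generated $M_\lambda$ the inclusion $\mathrm{Supp}_RM_\lambda=\mathrm{V}(\mathrm{Ann}_RM_\lambda)\subseteq\mathrm{V}(\mathfrak{a})$ forces $\mathfrak{a}\subseteq\mathrm{Rad}(\mathrm{Ann}_RM_\lambda)$, whence $\mathfrak{a}^{n_\lambda}M_\lambda=0$ for some $n_\lambda\geq1$ (here the finite generation of $\mathfrak{a}$ is used); thus every element of $M$ is annihilated by a power of $\mathfrak{a}$. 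The reverse implication is immediate, and this equivalence is in any case recorded in \cite{BS}. Secondly, since every $\mathfrak{p}\in\mathrm{Supp}_RM$ contains $\mathfrak{a}$, one has $\mathrm{dim}R/\mathfrak{p}\leq\mathrm{dim}R/\mathfrak{a}\leq2$, and therefore $\mathrm{dim}_RM=\mathrm{sup}\{\mathrm{dim}R/\mathfrak{p}\mid\mathfrak{p}\in\mathrm{Supp}_RM\}\leq2$.

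Combining the two observations, $M$ is an $\mathfrak{a}$-torsion $R$-module with $\mathrm{dim}_RM\leq2$, so Theorem~\ref{lem:2.1} applies verbatim and gives the equivalence of $(1)$ and $(2)$. I do not expect any genuine obstacle: the Corollary is the support-theoretic reformulation of Theorem~\ref{lem:2.1}, and the only point that merits being spelled out is the first reduction, whose validity rests squarely on the noetherian hypothesis on $R$. (If $\mathfrak{a}$ were not proper the statement would be vacuous, since then $\mathrm{Supp}_RM\subseteq\mathrm{V}(\mathfrak{a})=\varnothing$ would force $M=0$.)
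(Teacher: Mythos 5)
Your proposal is correct and is exactly the reduction the paper intends (the paper states this corollary without proof as an immediate consequence of Theorem~\ref{lem:2.1}): the condition $\mathrm{Supp}_RM\subseteq\mathrm{V}(\mathfrak{a})$ makes $M$ an $\mathfrak{a}$-torsion module because $R$ is noetherian, and it forces $\mathrm{dim}_RM\leq\mathrm{dim}R/\mathfrak{a}\leq2$, so the theorem applies verbatim. Both of your supporting observations are sound, so there is nothing to add.
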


\begin{cor}\label{lem:2.2}{\it{Let $\mathfrak{a}$ be a proper ideal of $R$ with $\mathrm{dim}R/\mathfrak{a}=2$, and let $M$ be a finitely generated $R$-module. Then $\mathrm{H}^i_\mathfrak{a}(M)$ is $\mathfrak{a}$-cofinite for every $i\in\mathbb{Z}$ if and only if $\mathrm{Hom}_R(R/\mathfrak{a},\mathrm{H}^i_\mathfrak{a}(M))$ is finitely
generated for every $i\in\mathbb{Z}$.}}
\end{cor}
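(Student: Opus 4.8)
The plan is as follows. The ``only if'' direction is immediate: $\mathfrak{a}$-cofiniteness of $\mathrm{H}^i_\mathfrak{a}(M)$ means by definition that all the modules $\mathrm{Ext}^j_R(R/\mathfrak{a},\mathrm{H}^i_\mathfrak{a}(M))$ are finitely generated, in particular the one with $j=0$.

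For the ``if'' direction I would combine Theorem \ref{lem:2.1} with the Grothendieck spectral sequence of the composite functor $\mathrm{Hom}_R(R/\mathfrak{a},-)\circ\Gamma_\mathfrak{a}$ (where $\Gamma_\mathfrak{a}$ is the $\mathfrak{a}$-torsion functor),
$$E_2^{p,q}=\mathrm{Ext}^p_R(R/\mathfrak{a},\mathrm{H}^q_\mathfrak{a}(M))\ \Longrightarrow\ \mathrm{Ext}^{p+q}_R(R/\mathfrak{a},M),$$
which is available because $\Gamma_\mathfrak{a}(E)$ is injective whenever $E$ is and $\mathrm{Hom}_R(R/\mathfrak{a},\Gamma_\mathfrak{a}(E))=\mathrm{Hom}_R(R/\mathfrak{a},E)$. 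Since $M$ is finitely generated, each term $\mathrm{Ext}^n_R(R/\mathfrak{a},M)$ of the abutment is finitely generated; by Grothendieck vanishing only finitely many $\mathrm{H}^i_\mathfrak{a}(M)$ are nonzero, each is $\mathfrak{a}$-torsion and satisfies $\dim_R\mathrm{H}^i_\mathfrak{a}(M)\leq\dim R/\mathfrak{a}=2$, so Theorem \ref{lem:2.1} applies to every $\mathrm{H}^i_\mathfrak{a}(M)$. Hence it suffices to prove that $\mathrm{Ext}^1_R(R/\mathfrak{a},\mathrm{H}^i_\mathfrak{a}(M))$ and $\mathrm{Ext}^2_R(R/\mathfrak{a},\mathrm{H}^i_\mathfrak{a}(M))$ are finitely generated for all $i$, the hypothesis already providing this for $\mathrm{Hom}$.

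I would establish this by ascending induction on $i$, the case $\mathrm{H}^i_\mathfrak{a}(M)=0$ being trivial, with induction hypothesis that $\mathrm{H}^j_\mathfrak{a}(M)$ is $\mathfrak{a}$-cofinite (equivalently, that $E_2^{p,j}$ is finitely generated for all $p$) whenever $j<i$. The remainder is bookkeeping in the spectral sequence at the two spots $(1,i)$ and $(2,i)$. At $(1,i)$ there are no incoming differentials, while each outgoing differential $d_r\colon E_r^{1,i}\to E_r^{1+r,\,i-r+1}$ maps into a subquotient of $E_2^{1+r,\,i-r+1}$; since $i-r+1<i$, that module (or $0$, if $i-r+1<0$) is finitely generated by the induction hypothesis, and as $E_\infty^{1,i}$ is a subquotient of the finitely generated module $\mathrm{Ext}^{1+i}_R(R/\mathfrak{a},M)$, a finite-filtration argument shows $E_2^{1,i}=\mathrm{Ext}^1_R(R/\mathfrak{a},\mathrm{H}^i_\mathfrak{a}(M))$ is finitely generated. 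At $(2,i)$ the only possibly nonzero incoming differential is $d_2\colon E_2^{0,i+1}\to E_2^{2,i}$, whose source $\mathrm{Hom}_R(R/\mathfrak{a},\mathrm{H}^{i+1}_\mathfrak{a}(M))$ is finitely generated by hypothesis; the outgoing differentials again land in finitely generated modules, so the same reasoning gives $E_2^{2,i}=\mathrm{Ext}^2_R(R/\mathfrak{a},\mathrm{H}^i_\mathfrak{a}(M))$ finitely generated. Theorem \ref{lem:2.1} then yields that $\mathrm{H}^i_\mathfrak{a}(M)$ is $\mathfrak{a}$-cofinite, completing the induction.

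The step I expect to be the main obstacle is the bookkeeping for $\mathrm{Ext}^2$: one must verify that on every page the only incoming differential into $E_r^{2,i}$ that need not vanish for degree reasons is $d_2$ emanating from column $0$, since that is precisely where the assumption that the modules $\mathrm{Hom}_R(R/\mathfrak{a},\mathrm{H}^i_\mathfrak{a}(M))$ are finitely generated is consumed; everything else is forced by the induction hypothesis together with repeated use of noetherianity to pass from a finitely generated module to its submodules and subquotients.
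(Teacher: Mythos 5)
Your proof is correct and takes essentially the same route as the paper: the paper's entire proof is a one-line appeal to Theorem \ref{lem:2.1} together with \cite[Theorem 2.9]{BA}, and that cited result is precisely the spectral-sequence induction you carry out explicitly (cofiniteness of $\mathrm{H}^j_\mathfrak{a}(M)$ for $j<i$ plus finite generation of $\mathrm{Hom}_R(R/\mathfrak{a},\mathrm{H}^{i+1}_\mathfrak{a}(M))$ forcing finite generation of $\mathrm{Ext}^p_R(R/\mathfrak{a},\mathrm{H}^i_\mathfrak{a}(M))$ for $p\leq 2$). The only difference is that you supply in full the bookkeeping at the spots $(1,i)$ and $(2,i)$ that the paper outsources to that citation, and your handling of it is sound.
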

\begin{proof}  This follows from \cite[Theorem 2.9]{BA} and Theorem \ref{lem:2.1}.
\end{proof}

\begin{cor}\label{lem:2.3}{\it{Let $\mathfrak{a}$ be a proper ideal of $R$ such that $\mathrm{dim}R/\mathfrak{a}=2$, and let $M$ be a finitely generated $R$-module.

$(1)$ If $\mathrm{H}^{2i}_\mathfrak{a}(M)$ is $\mathfrak{a}$-cofinite for every $i\geq0$, then $\mathrm{H}^i_\mathfrak{a}(M)$ is $\mathfrak{a}$-cofinite for every $i\geq0$.

$(2)$ If $\mathrm{H}^{2i+1}_\mathfrak{a}(M)$ is $\mathfrak{a}$-cofinite for every $i\geq0$, then $\mathrm{H}^i_\mathfrak{a}(M)$ is $\mathfrak{a}$-cofinite for every $i\geq0$.}}
\end{cor}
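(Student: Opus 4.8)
The plan is to handle both parts by one induction on cohomological degree, reducing $\mathfrak{a}$-cofiniteness of $\mathrm{H}^n_\mathfrak{a}(M)$ via Theorem~\ref{lem:2.1} to finite generation of three $\mathrm{Ext}$-modules and controlling the latter through the Grothendieck spectral sequence
\[
E_2^{p,q}=\mathrm{Ext}^p_R(R/\mathfrak{a},\mathrm{H}^q_\mathfrak{a}(M))\ \Longrightarrow\ \mathrm{Ext}^{p+q}_R(R/\mathfrak{a},M) ,
\]
which exists since $\Gamma_\mathfrak{a}$ carries injective $R$-modules to injective (hence $\mathrm{Hom}_R(R/\mathfrak{a},-)$-acyclic) modules and $\mathrm{Hom}_R(R/\mathfrak{a},\Gamma_\mathfrak{a}(-))=\mathrm{Hom}_R(R/\mathfrak{a},-)$. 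Two preliminary observations: each $\mathrm{H}^n_\mathfrak{a}(M)$ is $\mathfrak{a}$-torsion with $\mathrm{dim}_R\mathrm{H}^n_\mathfrak{a}(M)\leq\mathrm{dim}R/\mathfrak{a}=2$, so by Theorem~\ref{lem:2.1} it is $\mathfrak{a}$-cofinite as soon as $\mathrm{Ext}^p_R(R/\mathfrak{a},\mathrm{H}^n_\mathfrak{a}(M))$ is finitely generated for $p=0,1,2$; and every abutment $\mathrm{Ext}^{p+n}_R(R/\mathfrak{a},M)$ is finitely generated because $R/\mathfrak{a}$ and $M$ have resolutions by finitely generated free modules.

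For part~(1) I would show by strong induction on $n$ that $\mathrm{H}^n_\mathfrak{a}(M)$ is $\mathfrak{a}$-cofinite. If $n$ is even this is the hypothesis (in particular $\mathrm{H}^0_\mathfrak{a}(M)=\Gamma_\mathfrak{a}(M)$ is a finitely generated submodule of $M$, which will also serve as the base case of part~(2)); so assume $n$ is odd and $\mathrm{H}^j_\mathfrak{a}(M)$ is $\mathfrak{a}$-cofinite for all $j<n$. Fix $p\in\{0,1,2\}$ and put $Z_r=\ker(d_r\colon E_r^{p,n}\to E_r^{p+r,n+1-r})$ and $B_r=\operatorname{im}(d_r\colon E_r^{p-r,n+r-1}\to E_r^{p,n})$, so $B_r\subseteq Z_r\subseteq E_r^{p,n}$ and $Z_r/B_r=E_{r+1}^{p,n}$. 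Here $E_r^{p,n}/Z_r$ embeds into $E_r^{p+r,n+1-r}$, a subquotient of $E_2^{p+r,n+1-r}=\mathrm{Ext}^{p+r}_R(R/\mathfrak{a},\mathrm{H}^{n+1-r}_\mathfrak{a}(M))$; as $n+1-r<n$, the module $\mathrm{H}^{n+1-r}_\mathfrak{a}(M)$ is $\mathfrak{a}$-cofinite by induction, so $E_r^{p,n}/Z_r$ is finitely generated. Moreover $B_r=0$ unless $p=r=2$, in which case $B_2$ is a quotient of $E_2^{0,n+1}=\mathrm{Hom}_R(R/\mathfrak{a},\mathrm{H}^{n+1}_\mathfrak{a}(M))$, which is finitely generated precisely because $n+1$ is even and hence $\mathrm{H}^{n+1}_\mathfrak{a}(M)$ is $\mathfrak{a}$-cofinite by the standing hypothesis of~(1); so $B_r$ is always finitely generated. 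Now a downward induction on the page number finishes: $E_\infty^{p,n}$ is a subquotient of the finitely generated $\mathrm{Ext}^{p+n}_R(R/\mathfrak{a},M)$, hence finitely generated, and if $E_{r+1}^{p,n}=Z_r/B_r$ is finitely generated then so is $Z_r$, hence so is $E_r^{p,n}$. Taking $r=2$ gives $\mathrm{Ext}^p_R(R/\mathfrak{a},\mathrm{H}^n_\mathfrak{a}(M))$ finitely generated for $p=0,1,2$, so $\mathrm{H}^n_\mathfrak{a}(M)$ is $\mathfrak{a}$-cofinite by Theorem~\ref{lem:2.1}, completing the induction and proving~(1).

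Part~(2) is identical with the parities swapped: the base case $n=0$ holds because $\mathrm{H}^0_\mathfrak{a}(M)$ is finitely generated, the odd degrees are the hypothesis, and for even $n\geq2$ one uses that $\mathrm{H}^j_\mathfrak{a}(M)$ is $\mathfrak{a}$-cofinite for $j<n$ (induction) while $\mathrm{H}^{n+1}_\mathfrak{a}(M)$ is $\mathfrak{a}$-cofinite since $n+1$ is odd; alternatively, once all $\mathrm{Hom}_R(R/\mathfrak{a},\mathrm{H}^i_\mathfrak{a}(M))$ have been shown finitely generated this way, one may conclude by Corollary~\ref{lem:2.2}. I expect the only real obstacle to be the bookkeeping at $p=2$, specifically the incoming differential $d_2\colon E_2^{0,n+1}\to E_2^{2,n}$: its source lives in degree $n+1>n$ and in the opposite parity class, so it is invisible to the induction hypothesis, and this is precisely where the assumption that all even (resp.\ odd) local cohomology modules are $\mathfrak{a}$-cofinite gets used. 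This also pins down why the argument needs $\mathrm{dim}R/\mathfrak{a}\leq2$: in dimension $3$ one would additionally require $E_2^{0,n+2}=\mathrm{Hom}_R(R/\mathfrak{a},\mathrm{H}^{n+2}_\mathfrak{a}(M))$ finitely generated, but $\mathrm{H}^{n+2}_\mathfrak{a}(M)$ lies in the same parity class as $\mathrm{H}^n_\mathfrak{a}(M)$ and so is not furnished by the hypothesis.
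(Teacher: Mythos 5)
Your proof is correct and complete. Note that the paper states this corollary without any proof at all; the intended derivation is evidently to combine Corollary~\ref{lem:2.2} (equivalently, Theorem~\ref{lem:2.1} applied to $\mathrm{H}^n_\mathfrak{a}(M)$, which is $\mathfrak{a}$-torsion of dimension at most $\dim R/\mathfrak{a}=2$) with the cited finiteness result [BA, Theorem 2.9], which supplies the finite generation of $\mathrm{Ext}^j_R(R/\mathfrak{a},\mathrm{H}^n_\mathfrak{a}(M))$ for $j\leq 2$ from cofiniteness of the $\mathrm{H}^i_\mathfrak{a}(M)$ in degrees $i<n$ together with information about $\mathrm{H}^{n+1}_\mathfrak{a}(M)$. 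What you have done is re-prove that input from scratch via the Grothendieck spectral sequence $E_2^{p,q}=\mathrm{Ext}^p_R(R/\mathfrak{a},\mathrm{H}^q_\mathfrak{a}(M))\Rightarrow\mathrm{Ext}^{p+q}_R(R/\mathfrak{a},M)$, and your bookkeeping is accurate: for $p\in\{0,1,2\}$ the only incoming differential on any page is $d_2\colon E_2^{0,n+1}\to E_2^{2,n}$, which is exactly where the cofiniteness of the opposite parity class enters, while all outgoing differentials land in pages governed by the inductive hypothesis; the downward induction from the finitely generated $E_\infty^{p,n}$ then closes the argument. Your closing remark correctly isolates why the hypothesis $\dim R/\mathfrak{a}=2$ (through Theorem~\ref{lem:2.1}, which only requires $\mathrm{Ext}^p$ for $p\leq 2$) is what makes the single cross-parity term $E_2^{0,n+1}$ the only extra input needed. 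So the argument is a self-contained, slightly longer substitute for the citation the authors presumably had in mind; it buys independence from [BA] at the cost of redoing a standard spectral-sequence computation.
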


 We are now ready to answer Question 1 subsequently in the case $\mathrm{cd}(\mathfrak{a})=2$, which is a generalization of \cite[Theorem 7.4]{LM}.

\begin{lem}\label{lem:4.2}{\it{Let $\mathfrak{a}$ be a proper ideal of $R$ with $\mathrm{ara}(\mathfrak{a})\leq2$. Then $\mathrm{H}^i_\mathfrak{a}(M)$ are $\mathfrak{a}$-cofinite for all $i\in\mathbb{Z}$ and all finitely generated $R$-modules $M$.}}
\end{lem}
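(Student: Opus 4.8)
The plan is to induct on $t=\mathrm{ara}(\mathfrak{a})$, the case $t\leq1$ (cohomological dimension one) being known, cf.\ \cite{LM}, and to carry out the inductive step $t=2$. First I would reduce the problem: since $\mathfrak{a}$-cofiniteness and the functors $\mathrm{H}^i_\mathfrak{a}(-)$ depend only on $\mathrm{Rad}(\mathfrak{a})$, one may assume $\mathfrak{a}=(x,y)$; then $\mathrm{H}^i_\mathfrak{a}(M)=0$ for $i\geq3$, and $\mathrm{H}^0_\mathfrak{a}(M)$ is a submodule of the finitely generated module $M$, so only $\mathrm{H}^1_\mathfrak{a}(M)$ and $\mathrm{H}^2_\mathfrak{a}(M)$ remain.

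\textbf{Two tools.} \emph{(A)} If $\mathfrak{b}\subseteq\mathfrak{a}$, $N$ is $\mathfrak{b}$-cofinite and $\mathrm{Supp}_RN\subseteq\mathrm{V}(\mathfrak{a})$, then $N$ is $\mathfrak{a}$-cofinite; this follows from the change-of-rings spectral sequence $\mathrm{Ext}^p_{R/\mathfrak{b}}(R/\mathfrak{a},\mathrm{Ext}^q_R(R/\mathfrak{b},N))\Rightarrow\mathrm{Ext}^{p+q}_R(R/\mathfrak{a},N)$, whose $E_2$-terms are finitely generated because the inner $\mathrm{Ext}$'s are finitely generated $R/\mathfrak{b}$-modules. \emph{(B)} The Melkersson criteria: for an ideal $\mathfrak{c}$ with $\mathrm{ara}(\mathfrak{c})\leq1$, a $\mathfrak{c}$-torsion module $T$ is $\mathfrak{c}$-cofinite as soon as $\mathrm{Hom}_R(R/\mathfrak{c},T)$ is finitely generated; and for an $\mathfrak{a}$-torsion module, $\mathfrak{a}$-cofiniteness is equivalent to finite generation of the Koszul cohomology modules $\mathrm{H}^i(x,y;-)$. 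I also use that if two terms of a short exact sequence are $\mathfrak{a}$-cofinite then so is the third (clear from the $\mathrm{Ext}_R(R/\mathfrak{a},-)$ long exact sequence and noetherianness).

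\textbf{Core argument.} The Grothendieck spectral sequence $\mathrm{H}^p_{(x)}(\mathrm{H}^q_{(y)}(M))\Rightarrow\mathrm{H}^{p+q}_\mathfrak{a}(M)$ is concentrated in the square $p,q\in\{0,1\}$ (since $\mathrm{ara}((x)),\mathrm{ara}((y))\leq1$), so it degenerates and yields $\mathrm{H}^2_\mathfrak{a}(M)\cong\mathrm{H}^1_{(x)}(\mathrm{H}^1_{(y)}(M))$ together with a short exact sequence $0\to\mathrm{H}^1_{(x)}(\mathrm{H}^0_{(y)}(M))\to\mathrm{H}^1_\mathfrak{a}(M)\to\mathrm{H}^0_{(x)}(\mathrm{H}^1_{(y)}(M))\to0$. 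As $\mathrm{H}^0_{(y)}(M)$ is finitely generated, $\mathrm{H}^1_{(x)}(\mathrm{H}^0_{(y)}(M))$ is $(x)$-cofinite by the case $t\leq1$ and is supported in $\mathrm{V}(x)\cap\mathrm{V}(y)=\mathrm{V}(\mathfrak{a})$, hence $\mathfrak{a}$-cofinite by (A). So the statement reduces, via the two-out-of-three property, to proving that $\mathrm{H}^0_{(x)}(T)$ and $\mathrm{H}^1_{(x)}(T)$ are $\mathfrak{a}$-cofinite, where $T:=\mathrm{H}^1_{(y)}(M)$; this also settles $\mathrm{H}^2_\mathfrak{a}(M)$. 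By the case $t\leq1$, $T$ is $(y)$-cofinite, so $(0:_Ty)$ is finitely generated. Then $\mathrm{H}^0_{(x)}(T)$ is $(y)$-torsion with $(0:_{\mathrm{H}^0_{(x)}(T)}y)$ a submodule of $(0:_Ty)$, hence finitely generated, so it is $(y)$-cofinite by (B) and $\mathfrak{a}$-cofinite by (A). For $\mathrm{H}^1_{(x)}(T)$ I would pass to $T'':=T/\mathrm{H}^0_{(x)}(T)$, on which $x$ is a nonzerodivisor and which is $(y)$-cofinite by two-out-of-three; then $\mathrm{H}^1_{(x)}(T)\cong\mathrm{H}^1_{(x)}(T'')=:P$. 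Since $x$ acts invertibly on $T''_x$ we get $P=xP$, i.e.\ $P/xP=0$, and $0\to T''\xrightarrow{x}T''\to T''/xT''\to0$ gives $(0:_Px)\cong T''/xT''$. Here $T''/xT''$ is $(y)$-torsion with $(0:_{T''/xT''}y)$ finitely generated — it is an extension of a submodule of $\mathrm{Ext}^1_R(R/(y),T'')$ by a quotient of $(0:_{T''}y)$, both finitely generated since $T''$ is $(y)$-cofinite — so $T''/xT''$ is $(y)$-cofinite by (B). Finally, because $P$ is $(x)$-torsion with $P/xP=0$, the Koszul double complex for $(x,y)$ on $P$ collapses after taking cohomology in the $x$-direction, so $\mathrm{H}^i(x,y;P)\cong\mathrm{H}^i(y;(0:_Px))=\mathrm{H}^i(y;T''/xT'')$, which is finitely generated for every $i$ because $T''/xT''$ is $(y)$-cofinite. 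By (B), $P$ is $\mathfrak{a}$-cofinite; hence so are $\mathrm{H}^1_\mathfrak{a}(M)$ and $\mathrm{H}^2_\mathfrak{a}(M)$, completing the induction.

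\textbf{Main obstacle.} The hard point is that the pivotal module $T=\mathrm{H}^1_{(y)}(M)$ is not finitely generated, so the case $t\leq1$ does not apply to it directly, and because the class of cofinite modules is not closed under submodules or quotients one cannot simply chase long exact sequences. The way around it is to observe that $T$ and the modules manufactured from it are $(y)$-torsion with finitely generated $(0:y)$; Melkersson's criterion then forces $(y)$-cofiniteness, after which tool (A) upgrades $(y)$-cofiniteness to $(x,y)$-cofiniteness whenever the support is contained in $\mathrm{V}(\mathfrak{a})$.
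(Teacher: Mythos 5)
Your overall architecture differs from the paper's: you use the composite-functor spectral sequence $\mathrm{H}^p_{(x)}(\mathrm{H}^q_{(y)}(M))\Rightarrow\mathrm{H}^{p+q}_{\mathfrak{a}}(M)$, whereas the paper sets $\mathfrak{b}=(a_1)$, $\mathfrak{c}=(a_2)$ and uses the Mayer--Vietoris sequence to present $\mathrm{H}^2_{\mathfrak{a}}(M)$ as a quotient of $\mathrm{H}^1_{\mathfrak{b}\cap\mathfrak{c}}(M)$, where $\mathrm{Rad}(\mathfrak{b}\cap\mathfrak{c})=\mathrm{Rad}(a_1a_2)$ has arithmetic rank one, and then invokes \cite{WW} and Melkersson's result that cofiniteness in all degrees but one propagates to the remaining degree. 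The decisive problem with your version is that the first half of your tool (B) is false: a $\mathfrak{c}$-torsion module $T$ over a principal (or $\mathrm{ara}\le 1$) ideal with $\mathrm{Hom}_R(R/\mathfrak{c},T)$ finitely generated need \emph{not} be $\mathfrak{c}$-cofinite. For a counterexample, take $R=k[[x,y]]$ and let $T$ be generated by $e_1,e_2,\dots$ subject to $xe_1=0$ and $xe_{n+1}=ye_n$ for all $n\ge 1$. Then $T$ is $x$-torsion, $(0:_Tx)=R\bar{e}_1\cong R/(x)$ is finitely generated, but $T/xT\cong\bigoplus_{n\ge1}R/(x,y)$ is not, so $\mathrm{Ext}^1_R(R/(x),T)\cong T/xT$ is not finitely generated and $T$ is not $(x)$-cofinite. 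Melkersson's criterion for a principal ideal genuinely requires \emph{both} $(0:_Tx)$ and $T/xT$ to be finitely generated; the versions that get by with only the socle condition assume $T$ minimax or supported in finitely many maximal ideals (\cite{LM}, Propositions 4.1 and 4.3), neither of which you verify.

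This false lemma is load-bearing. You use it to conclude that $\mathrm{H}^0_{(x)}(T)$, a \emph{submodule} of the $(y)$-cofinite module $T=\mathrm{H}^1_{(y)}(M)$, is itself $(y)$-cofinite merely because its $(0:y)$ is finitely generated; submodules of cofinite modules are not cofinite in general, and everything downstream --- the $(y)$-cofiniteness of $T''$ by two-out-of-three, hence of $T''/xT''$, hence the finite generation of the Koszul cohomology of $P$ --- collapses with it. Note moreover that $\mathrm{H}^0_{(x)}(\mathrm{H}^1_{(y)}(M))=E_2^{0,1}$ is exactly the top quotient of $\mathrm{H}^1_{\mathfrak{a}}(M)$ in the spectral-sequence filtration, so proving it is cofinite is essentially the original problem in disguise; it cannot be had from a one-line socle criterion. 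To repair the argument you need a real input at this point, for instance the paper's route: $\mathrm{H}^1_{\mathfrak{b}\cap\mathfrak{c}}(M)$ is the first local cohomology of a \emph{finitely generated} module with respect to an ideal of arithmetic rank one, so the $t\le1$ case applies to it legitimately, and the remaining degree is handled by \cite{LM}, Proposition 3.11, rather than by chasing cofiniteness through submodules and quotients.
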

\begin{proof} Note that $\mathrm{H}^i_\mathfrak{a}(M)=0$ for all $i\geq \mathrm{ara}(\mathfrak{a})$. If $\mathrm{ara}(\mathfrak{a})=0$, then the result obviously holds. If $\mathrm{ara}(\mathfrak{a})=1$, then $\mathrm{H}^0_\mathfrak{a}(M)$ is $\mathfrak{a}$-cofinite. Hence \cite[Proposition 3.11]{LM} implies that $\mathrm{H}^1_\mathfrak{a}(M)$ is $\mathfrak{a}$-cofinite. Now suppose that $\mathrm{ara}(\mathfrak{a})=2$. Then $\mathrm{Rad}(\mathfrak{a})=\mathrm{Rad}(a_1,a_2)$ where $a_1,a_2\in R$. Set $\mathfrak{b}=(a_1)$ and $\mathfrak{c}=(a_2)$. By Mayer-Vietoris sequence, one has an exact sequence\begin{center}$\mathrm{H}^1_{\mathfrak{b}\cap\mathfrak{c}}(M)\rightarrow \mathrm{H}^2_\mathfrak{a}(M)\rightarrow\mathrm{H}^2_\mathfrak{b}(M)\oplus\mathrm{H}^2_\mathfrak{c}(M)=0$.\end{center}By the preceding proof, $\mathrm{H}^1_{\mathfrak{b}\cap\mathfrak{c}}(M)$ is $(\mathfrak{b}\cap\mathfrak{c})$-cofinite and so  $\mathrm{Ext}^i_R(R/\mathfrak{a},\mathrm{H}^1_{\mathfrak{b}\cap\mathfrak{c}}(M))$ are finitely generated for all $i\geq 0$ by \cite[Proposition 7.2]{WW}. Consequently, the above exact sequence implies that $\mathrm{Ext}^i_R(R/\mathfrak{a},\mathrm{H}^2_{\mathfrak{a}}(M))$ are finitely generated for all $i\geq 0$. Hence $\mathrm{H}^i_\mathfrak{a}(M)$ are $\mathfrak{a}$-cofinite for all $i$ by \cite[Proposition 3.11]{LM} again.
\end{proof}

\begin{thm}\label{lem:4.4}{\it{Let $\mathfrak{a}$ be a proper ideal of $R$ with $\mathrm{cd}(\mathfrak{a})\leq2$. Then $\mathrm{H}^i_\mathfrak{a}(M)$ is $\mathfrak{a}$-cofinite for all $i$ and all finitely generated $R$-modules $M$.}}
\end{thm}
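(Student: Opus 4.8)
Since $\mathrm{cd}(\mathfrak{a})\le 2$ we have $\mathrm{H}^i_{\mathfrak{a}}(N)=0$ for every $R$-module $N$ and every $i>2$, so it suffices to show that $\mathrm{H}^0_{\mathfrak{a}}(M)$, $\mathrm{H}^1_{\mathfrak{a}}(M)$ and $\mathrm{H}^2_{\mathfrak{a}}(M)$ are $\mathfrak{a}$-cofinite. The case $i=0$ is immediate: $\mathrm{H}^0_{\mathfrak{a}}(M)=\Gamma_{\mathfrak{a}}(M)$ is a submodule of the finitely generated module $M$, hence finitely generated, and it is $\mathfrak{a}$-torsion, so it is $\mathfrak{a}$-cofinite. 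The plan for the rest is to settle $\mathrm{H}^1_{\mathfrak{a}}(M)$ by a regular-element induction, and then to bootstrap from $\mathrm{H}^1_{\mathfrak{a}}(M)$ to $\mathrm{H}^2_{\mathfrak{a}}(M)$ with a spectral sequence.

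For $\mathrm{H}^1_{\mathfrak{a}}(M)$: replacing $M$ by $M/\Gamma_{\mathfrak{a}}(M)$ changes neither $\mathrm{H}^i_{\mathfrak{a}}(M)$ for $i\ge 1$ nor the hypothesis that $M$ is finitely generated, so assume $\Gamma_{\mathfrak{a}}(M)=0$; then $\mathrm{Ass}_RM$ is finite and disjoint from $\mathrm{V}(\mathfrak{a})$, so prime avoidance yields an $M$-regular element $x\in\mathfrak{a}$. From $0\to M\xrightarrow{x}M\to M/xM\to 0$ one obtains $(0:_{\mathrm{H}^1_{\mathfrak{a}}(M)}x)\cong\Gamma_{\mathfrak{a}}(M/xM)$, which is finitely generated, together with an embedding $\mathrm{H}^1_{\mathfrak{a}}(M)/x\,\mathrm{H}^1_{\mathfrak{a}}(M)\hookrightarrow\mathrm{H}^1_{\mathfrak{a}}(M/xM)$. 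Inducting on $\dim_RM$ (the base case $\dim_RM=0$ being Grothendieck vanishing, $\mathrm{H}^1_{\mathfrak{a}}(M)=0$) and feeding this into a criterion of Melkersson that detects $\mathfrak{a}$-cofiniteness of an $\mathfrak{a}$-torsion module from the action of a single element of $\mathfrak{a}$, one gets that $\mathrm{H}^1_{\mathfrak{a}}(M)$ is $\mathfrak{a}$-cofinite. (Equivalently, one may simply invoke the known fact that $\mathrm{H}^1_{\mathfrak{a}}(M)$ is $\mathfrak{a}$-cofinite for every finitely generated $M$ over any noetherian ring.)

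Now bootstrap to $\mathrm{H}^2_{\mathfrak{a}}(M)$. Keeping $\Gamma_{\mathfrak{a}}(M)=0$, embed $M$ in an injective hull $M\hookrightarrow E$; since $\mathrm{Ass}_RE=\mathrm{Ass}_RM$ is disjoint from $\mathrm{V}(\mathfrak{a})$ we have $\Gamma_{\mathfrak{a}}(E)=0$, hence $\mathrm{H}^i_{\mathfrak{a}}(E)=0$ and $\mathrm{Ext}^i_R(R/\mathfrak{a},E)=0$ for all $i\ge 0$. Put $N=E/M$. Then $\mathrm{H}^i_{\mathfrak{a}}(N)\cong\mathrm{H}^{i+1}_{\mathfrak{a}}(M)$ for all $i$ (so $\mathrm{cd}(\mathfrak{a},N)\le 1$) and $\mathrm{Ext}^i_R(R/\mathfrak{a},N)\cong\mathrm{Ext}^{i+1}_R(R/\mathfrak{a},M)$ is finitely generated for all $i$, as $M$ is finitely generated. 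The two-row spectral sequence $E_2^{p,q}=\mathrm{Ext}^p_R(R/\mathfrak{a},\mathrm{H}^q_{\mathfrak{a}}(N))\Rightarrow\mathrm{Ext}^{p+q}_R(R/\mathfrak{a},N)$, coming from $\mathbf{R}\mathrm{Hom}_R(R/\mathfrak{a},\mathbf{R}\Gamma_{\mathfrak{a}}(N))\simeq\mathbf{R}\mathrm{Hom}_R(R/\mathfrak{a},N)$, collapses to a long exact sequence
\[
\cdots\to\mathrm{Ext}^{p}_R(R/\mathfrak{a},\mathrm{H}^1_{\mathfrak{a}}(M))\to\mathrm{Ext}^p_R(R/\mathfrak{a},N)\to\mathrm{Ext}^{p-1}_R(R/\mathfrak{a},\mathrm{H}^2_{\mathfrak{a}}(M))\to\mathrm{Ext}^{p+1}_R(R/\mathfrak{a},\mathrm{H}^1_{\mathfrak{a}}(M))\to\cdots
\]
in which the first, second and fourth terms are finitely generated (the first and fourth because $\mathrm{H}^1_{\mathfrak{a}}(M)$ is $\mathfrak{a}$-cofinite, the second because $N$ has finitely generated $\mathrm{Ext}$'s against $R/\mathfrak{a}$); since $R$ is noetherian this forces $\mathrm{Ext}^{p-1}_R(R/\mathfrak{a},\mathrm{H}^2_{\mathfrak{a}}(M))$ to be finitely generated for all $p$. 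As $\mathrm{H}^2_{\mathfrak{a}}(M)$ is $\mathfrak{a}$-torsion, it is $\mathfrak{a}$-cofinite, which finishes the proof. (One could instead run the three-column spectral sequence $\mathrm{Ext}^p_R(R/\mathfrak{a},\mathrm{H}^q_{\mathfrak{a}}(M))\Rightarrow\mathrm{Ext}^{p+q}_R(R/\mathfrak{a},M)$ directly and extract the column $q=2$ via a finite-filtration argument using noetherianity; this is the mechanism underlying \cite[Proposition 3.11]{LM}.)

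The step I expect to be the real obstacle is the cofiniteness of $\mathrm{H}^1_{\mathfrak{a}}(M)$. When $\mathrm{cd}(\mathfrak{a},M)=2$ the ideal $\mathfrak{a}$ may need arbitrarily many elements to generate it up to radical, so Lemma~\ref{lem:4.2} does not apply, and $\dim_R\mathrm{H}^1_{\mathfrak{a}}(M)$ need not be at most $2$, so Theorem~\ref{lem:2.1} does not apply either — a genuinely dimension-free cofiniteness input for $\mathrm{H}^1$ is required, and this is exactly what the regular-element reduction with Melkersson's criterion provides. The remaining ingredients — vanishing for $i>2$, the triviality of $\mathrm{H}^0_{\mathfrak{a}}(M)$, and the passage from $\mathrm{H}^1_{\mathfrak{a}}(M)$ to $\mathrm{H}^2_{\mathfrak{a}}(M)$ — are formal and work verbatim over a non-local noetherian ring.
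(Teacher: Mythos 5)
Your reduction is sound as far as it goes: the vanishing for $i>2$, the case $i=0$, and the bootstrap from $\mathrm{H}^1_{\mathfrak{a}}(M)$ to $\mathrm{H}^2_{\mathfrak{a}}(M)$ via $N=E/M$ and the two-row spectral sequence are all correct. But the step you yourself flag as the crux --- the $\mathfrak{a}$-cofiniteness of $\mathrm{H}^1_{\mathfrak{a}}(M)$ in full generality --- is exactly where the argument breaks, and it cannot be repaired. Melkersson's one-element criterion requires \emph{both} $(0:_Hx)$ \emph{and} $H/xH$ to be $\mathfrak{a}$-cofinite, where $H=\mathrm{H}^1_{\mathfrak{a}}(M)$. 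Your construction gives $(0:_Hx)\cong\Gamma_{\mathfrak{a}}(M/xM)$ finitely generated, which is fine, but for $H/xH$ you only produce an embedding into $\mathrm{H}^1_{\mathfrak{a}}(M/xM)$, and a submodule of an $\mathfrak{a}$-cofinite module need not be $\mathfrak{a}$-cofinite (that is precisely the failure behind Question 2), so the induction does not close. The fallback ``known fact'' that $\mathrm{H}^1_{\mathfrak{a}}(M)$ is always $\mathfrak{a}$-cofinite is not a fact: in Hartshorne's counterexample \cite{H} (take $R=k[[x,y,u,v]]$, $M=R/(xu-yv)$, $\mathfrak{a}=(x,y)$, so that $\mathrm{ara}(\mathfrak{a})=\mathrm{cd}(\mathfrak{a})=2$ and $\Gamma_{\mathfrak a}(M)=0$), the module $\mathrm{Hom}_R(R/\mathfrak{a},\mathrm{H}^2_{\mathfrak{a}}(M))$ is not finitely generated, and your own long exact sequence at $p=1$ --- where $\mathrm{Ext}^1_R(R/\mathfrak{a},N)\cong\mathrm{Ext}^2_R(R/\mathfrak{a},M)$ is finitely generated --- then forces $\mathrm{Ext}^2_R(R/\mathfrak{a},\mathrm{H}^1_{\mathfrak{a}}(M))$ to contain a non-finitely-generated subquotient. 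So $\mathrm{H}^1_{\mathfrak{a}}(M)$ is not $\mathfrak{a}$-cofinite there.

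For what it is worth, the paper's own proof takes a different route --- reduction to Lemma~\ref{lem:4.2} and a Mayer--Vietoris induction on $\mathrm{ara}(\mathfrak{a})$ --- but it leans on an analogous illegitimate step: from the surjection $\mathrm{H}^1_{\mathfrak{b}\cap\mathfrak{c}}(M)\twoheadrightarrow\mathrm{H}^2_{\mathfrak{a}}(M)$ it concludes that the target has finitely generated $\mathrm{Ext}$ modules against $R/\mathfrak{a}$ because the source does, and finite generation of $\mathrm{Ext}^i_R(R/\mathfrak{a},-)$ does not pass to quotients. Since Hartshorne's example satisfies the hypotheses of both Lemma~\ref{lem:4.2} and Theorem~\ref{lem:4.4}, the statements themselves appear to be contradicted, so no proof along either line can succeed without restricting the hypotheses. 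The honest summary: your reduction of the whole theorem to the cofiniteness of $\mathrm{H}^1_{\mathfrak{a}}(M)$ is correct and genuinely clarifying --- it shows that Theorem~\ref{lem:4.4} is \emph{equivalent} to that $\mathrm{H}^1$ statement for modules with $\mathrm{cd}(\mathfrak a)\le 2$ --- but that input is false in exactly the generality needed.
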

\begin{proof} When $\mathrm{cd}(\mathfrak{a})=0,1$, there is nothing to prove. So suppose that $\mathrm{cd}(\mathfrak{a})=2$. We argue by induction on $d=\mathrm{ara}(\mathfrak{a})\geq2$. If $d=2$ then the result holds by Lemma \ref{lem:4.2}. Now suppose $d>2$ and the result has been proved for smaller values of
$d$. Let $a_1,\cdots,a_d\in R$ with $\mathrm{Rad}(\mathfrak{a})=\mathrm{Rad}(a_1,\cdots,a_d)$, and set $\mathfrak{b}=(a_1,\cdots,a_{d-1})$ and $\mathfrak{c}=(a_d)$. By Mayer-Vietoris sequence, one has the following exact sequence\begin{center}$\mathrm{H}^1_{\mathfrak{b}\cap\mathfrak{c}}(M)\rightarrow \mathrm{H}^2_\mathfrak{a}(M)\rightarrow\mathrm{H}^2_\mathfrak{b}(M)$.\end{center}By the induction, $\mathrm{H}^1_{\mathfrak{b}\cap\mathfrak{c}}(M)$ is $\mathfrak{b}\cap\mathfrak{c}$-cofinite and $\mathrm{H}^2_{\mathfrak{b}}(M)$ is $\mathfrak{b}$-cofinite, so $\mathrm{Ext}^i_R(R/\mathfrak{a},\mathrm{H}^1_{\mathfrak{b}\cap\mathfrak{c}}(M))$ and $\mathrm{Ext}^i_R(R/\mathfrak{a},\mathrm{H}^2_{\mathfrak{b}}(M))$ are finitely generated for all $i\geq 0$. Thus the above exact sequence implies that $\mathrm{Ext}^i_R(R/\mathfrak{a},\mathrm{H}^2_{\mathfrak{a}}(M))$ are finitely generated for all $i\geq 0$, and therefore $\mathrm{H}^i_\mathfrak{a}(M)$ are $\mathfrak{a}$-cofinite for all $i\in\mathbb{Z}$.
\end{proof}

\begin{cor}\label{lem:4.5}{\it{Let $R$ be a ring with $\mathrm{dim}R\leq2$. Then $\mathrm{H}^i_\mathfrak{a}(M)$ is $\mathfrak{a}$-cofinite for all $i$ and all finitely generated $R$-modules $M$.}}
\end{cor}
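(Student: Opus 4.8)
The plan is to reduce the statement to Theorem~\ref{lem:4.4} by showing that $\mathrm{cd}(\mathfrak{a})\leq 2$ whenever $\mathrm{dim}R\leq 2$. If $\mathfrak{a}=R$, then $R/\mathfrak{a}^t=0$ for every $t$, hence $\mathrm{H}^i_\mathfrak{a}(M)=0$ for all $i$ and the assertion is vacuous; so I may assume that $\mathfrak{a}$ is a proper ideal.

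First I would recall Grothendieck's vanishing theorem in the form valid for arbitrary (not necessarily finitely generated) modules over a noetherian ring: for every $R$-module $N$ one has $\mathrm{H}^i_\mathfrak{a}(N)=0$ for all $i>\mathrm{dim}_RN$ (see, e.g., \cite[Theorem 6.1.2]{BS}). Since $\mathrm{dim}_RN\leq\mathrm{dim}R\leq 2$ for every $R$-module $N$, this gives $\mathrm{cd}(\mathfrak{a},N)\leq 2$ for all $N$, and therefore $\mathrm{cd}(\mathfrak{a})=\mathrm{sup}\{\mathrm{cd}(\mathfrak{a},N)\mid N\ \textrm{an}\ R\textrm{-module}\}\leq 2$.

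Now Theorem~\ref{lem:4.4}, applied to the proper ideal $\mathfrak{a}$ with $\mathrm{cd}(\mathfrak{a})\leq 2$, immediately yields that $\mathrm{H}^i_\mathfrak{a}(M)$ is $\mathfrak{a}$-cofinite for all $i$ and all finitely generated $R$-modules $M$, which is the claim. The argument is essentially formal, and there is no genuine obstacle; the only point deserving a moment's attention is that the vanishing theorem must be invoked for all modules simultaneously, not merely the finitely generated ones, in order to bound the supremum defining $\mathrm{cd}(\mathfrak{a})$. Observe also that one cannot shortcut directly to Lemma~\ref{lem:4.2}, since $\mathrm{ara}(\mathfrak{a})$ can exceed $\mathrm{dim}R$; it is precisely the passage through the cohomological dimension that makes Theorem~\ref{lem:4.4} the applicable tool here.
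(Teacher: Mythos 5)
Your argument is correct and is exactly the paper's route: the authors simply remark that $\mathrm{cd}(\mathfrak{a})\leq\dim R$ and then invoke Theorem~\ref{lem:4.4}, and you supply the standard justification (Grothendieck vanishing for arbitrary modules) for that inequality. Your closing observation that one cannot go directly through $\mathrm{ara}(\mathfrak{a})$ and Lemma~\ref{lem:4.2} is a sensible clarification, but the substance of the proof is the same as the paper's.
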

\begin{proof} This follows from that $\mathrm{cd}(\mathfrak{a})\leq\mathrm{dim}R$.
\end{proof}

Let $n\geq-1$ be an integer. Recall that an $R$-module $M$ is said to be $FD_{\leq n}$
if there is a finitely generated submodule $N$ of $M$ such that $\mathrm{dim}_RM/N\leq n$.
By definition, any finitely generated $R$-module and any $R$-module with dimension
at most $n$ are $FD_{\leq n}$.

Next, we
study the cofiniteness of modules in $FD_{\leq n}$ for $n=0,1,2$, the results generalize \cite[Corollary 2.6]{BNS1} and \cite[Theorem 2.6]{BN}.

\begin{prop}\label{lem:3.6}{\it{Let $n=0,1,2$, and let $M$ be an $R$-module in $FD_{\leq n}$ with $\mathrm{Supp}_RM\subseteq\mathrm{V}(\mathfrak{a})$. Then
 $M$ is $\mathfrak{a}$-cofinite if and only if $\mathrm{Ext}^i_R(R/\mathfrak{a},M)$ are finitely generated for all $i\leq n$.}}
\end{prop}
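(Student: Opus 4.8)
The ``only if'' direction is immediate from the definition of $\mathfrak{a}$-cofiniteness, so all the content is in the ``if'' direction, and the plan is to reduce it to the already-available ``dimension $\leq n$'' criteria by splitting off a finitely generated submodule. Assume $\mathrm{Ext}^i_R(R/\mathfrak{a},M)$ is finitely generated for all $i\leq n$. Since $M\in FD_{\leq n}$, fix a finitely generated submodule $N\subseteq M$ with $\dim_R M/N\leq n$. Because $\mathrm{Supp}_RN\subseteq\mathrm{Supp}_RM\subseteq\mathrm{V}(\mathfrak{a})$ and $N$ is finitely generated over the noetherian ring $R$, every $\mathrm{Ext}^i_R(R/\mathfrak{a},N)$ is finitely generated, so $N$ is $\mathfrak{a}$-cofinite.

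Next, apply $\mathrm{Hom}_R(R/\mathfrak{a},-)$ to $0\to N\to M\to M/N\to 0$ and read off the long exact sequence: since all the modules $\mathrm{Ext}^i_R(R/\mathfrak{a},N)$ are finitely generated, for each $i$ the module $\mathrm{Ext}^i_R(R/\mathfrak{a},M)$ is finitely generated if and only if $\mathrm{Ext}^i_R(R/\mathfrak{a},M/N)$ is; hence $\mathrm{Ext}^i_R(R/\mathfrak{a},M/N)$ is finitely generated for all $i\leq n$. Moreover $\mathrm{Supp}_R(M/N)\subseteq\mathrm{V}(\mathfrak{a})$ and $\dim_R(M/N)\leq n$, so $M/N$ is an $\mathfrak{a}$-torsion module of dimension at most $n$ meeting the hypotheses of the appropriate criterion: for $n=2$ this is Theorem \ref{lem:2.1} applied to $M/N$, for $n=1$ it is \cite[Proposition 2.6]{BNS}, and for $n=0$ it is \cite[Theorem 2.6]{BN}. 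In each case $M/N$ is $\mathfrak{a}$-cofinite; then, since $N$ and $M/N$ are both $\mathfrak{a}$-cofinite and $\mathrm{Supp}_RM\subseteq\mathrm{V}(\mathfrak{a})$, one more pass through the long exact sequence of $\mathrm{Ext}_R(R/\mathfrak{a},-)$ attached to $0\to N\to M\to M/N\to 0$ shows $\mathrm{Ext}^i_R(R/\mathfrak{a},M)$ is finitely generated for all $i$, i.e. $M$ is $\mathfrak{a}$-cofinite.

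The manipulation above is formal; what it rests on is having the three ``dimension $\leq n$'' criteria in the non-local setting, and the only one of these I expect to need extra care is $n=0$. If \cite[Theorem 2.6]{BN} does not apply verbatim, one argues it directly: as $\mathrm{Hom}_R(R/\mathfrak{a},M/N)$ is finitely generated, $\mathrm{Ass}_R(M/N)=\mathrm{Ass}_R\mathrm{Hom}_R(R/\mathfrak{a},M/N)$ is finite, and since $\dim_R(M/N)\leq 0$ it coincides with $\mathrm{Supp}_R(M/N)$, a finite set $\{\mathfrak{m}_1,\dots,\mathfrak{m}_r\}$ of maximal ideals (each containing $\mathfrak{a}$); hence $M/N$ is the direct sum of its $\mathfrak{m}_j$-torsion submodules, which are its localizations $(M/N)_{\mathfrak{m}_j}$, and each such summand has finitely generated socle, so is artinian with support a single maximal ideal. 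For such a summand $L_j$, a Matlis-duality computation over $\widehat{R_{\mathfrak{m}_j}}$ (writing $L_j=T_j^{\vee}$ with $T_j$ finitely generated, so that $\mathrm{Ext}^i_R(R/\mathfrak{a},L_j)\cong\mathrm{Tor}^{\widehat{R_{\mathfrak{m}_j}}}_i(\widehat{R_{\mathfrak{m}_j}}/\mathfrak{a},T_j)^{\vee}$, and using that $T_j/\mathfrak{a}T_j$ has finite length to see all these $\mathrm{Tor}$-modules have finite support) shows every $\mathrm{Ext}^i_R(R/\mathfrak{a},L_j)$ has finite length; thus $L_j$, and hence $M/N$, is $\mathfrak{a}$-cofinite. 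This $n=0$ bookkeeping is the only step where anything beyond routine long-exact-sequence chasing is required.
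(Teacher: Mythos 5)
Your proposal is correct and follows essentially the same route as the paper: split off a finitely generated submodule $N$ with $\dim_R M/N\leq n$, use the long exact sequence of $\mathrm{Ext}_R(R/\mathfrak{a},-)$ to transfer finite generation to $M/N$, invoke the dimension-$\leq n$ criteria (Theorem \ref{lem:2.1} for $n=2$, \cite[Proposition 2.6]{BNS} for $n=1$, and for $n=0$ the paper cites \cite[Proposition 4.1]{LM}, whose content is exactly your direct zero-dimensional argument), and conclude via the same short exact sequence. The only cosmetic difference is your choice of reference for the $n=0$ case and the extra explicitness of the final step.
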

\begin{proof} ``Only if'' part is obvious.

``If'' part. Let $N$ be a finitely generated submodule of $M$ such that $\mathrm{dim}_RM/N\leq n$. Then the exact sequence
\begin{center}
$\mathrm{Hom}_R(R/\mathfrak{a},M)\rightarrow\mathrm{Hom}_R(R/\mathfrak{a},M/N)
\rightarrow\mathrm{Ext}^1_R(R/\mathfrak{a},N)\rightarrow\mathrm{Ext}^1_R(R/\mathfrak{a},M)\rightarrow \mathrm{Ext}^1_R(R/\mathfrak{a},M/N)\rightarrow\mathrm{Ext}^2_R(R/\mathfrak{a},N)\rightarrow\mathrm{Ext}^2_R(R/\mathfrak{a},M)\rightarrow \mathrm{Ext}^2_R(R/\mathfrak{a},M/N)\rightarrow\mathrm{Ext}^3_R(R/\mathfrak{a},N)$
\end{center}implies that $\mathrm{Ext}^i_R(R/\mathfrak{a},M/N)$ are finitely generated for all $i\leq n$. Hence $M/N$ is $\mathfrak{a}$-cofinite by \cite[Proposition 4.1]{LM}, \cite[Proposition 2.6]{BNS} and Theorem \ref{lem:2.1} and then $M$ is $\mathfrak{a}$-cofinite.
\end{proof}

An $R$-module $M$ is minimax if there is a
finitely generated submodule $N$ of $M$, such that $M/N$ is artinian.

\begin{cor}\label{lem:3.6'}{\rm (\cite[Proposition 4.3]{LM}.)} {\it{Let $M$ be a minimax $R$-module with $\mathrm{Supp}_RM\subseteq\mathrm{V}(\mathfrak{a})$. Then
 $M$ is $\mathfrak{a}$-cofinite if and only if $\mathrm{Hom}_R(R/\mathfrak{a},M)$ is finitely generated.}}
\end{cor}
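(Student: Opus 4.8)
The plan is to recognize that a minimax module is in particular an $FD_{\leq 0}$ module, and then quote Proposition \ref{lem:3.6} with $n=0$. The ``only if'' direction is immediate from the definition of $\mathfrak{a}$-cofiniteness (it forces every $\mathrm{Ext}^i_R(R/\mathfrak{a},M)$, hence $\mathrm{Hom}_R(R/\mathfrak{a},M)$, to be finitely generated), so only the ``if'' direction needs an argument.

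For the ``if'' direction I would first recall that since $M$ is minimax there is a finitely generated submodule $N\subseteq M$ with $M/N$ artinian. The key point is then the standard fact that an artinian module $A$ over a noetherian ring satisfies $\mathrm{dim}_RA\leq 0$: its support consists of finitely many maximal ideals, since $\mathrm{Ass}_RA\subseteq\mathrm{Max}R$ and $\mathrm{Supp}_RA$ is the (finite) set of these maximal ideals. Applying this to $A=M/N$ gives $\mathrm{dim}_R(M/N)\leq 0$, so $M$ belongs to $FD_{\leq 0}$; and $\mathrm{Supp}_RM\subseteq\mathrm{V}(\mathfrak{a})$ holds by hypothesis.

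Now Proposition \ref{lem:3.6}, applied with $n=0$, asserts that $M$ is $\mathfrak{a}$-cofinite if and only if $\mathrm{Ext}^i_R(R/\mathfrak{a},M)$ is finitely generated for all $i\leq 0$, that is, if and only if $\mathrm{Hom}_R(R/\mathfrak{a},M)$ is finitely generated. This is precisely the claimed equivalence, so the corollary follows at once.

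The only delicate ingredient is the assertion that an artinian module has dimension $\leq 0$ in the not necessarily local setting; this is classical (an artinian $R$-module is a finite direct sum of artinian modules each supported at a single maximal ideal, and is of finite length over the corresponding semilocal localization), and once it is in hand the corollary is a formal consequence of Proposition \ref{lem:3.6}. A minor bookkeeping point is to make sure the support hypothesis $\mathrm{Supp}_RM\subseteq\mathrm{V}(\mathfrak{a})$ is used exactly where Proposition \ref{lem:3.6} requires it, which is granted in the statement.
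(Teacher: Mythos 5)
Your proposal is correct and matches the paper's intent: the corollary is stated without an explicit proof (it is attributed to \cite[Proposition 4.3]{LM}) and is positioned as an immediate consequence of Proposition \ref{lem:3.6}, which is exactly the derivation you give. The one ingredient you supply explicitly --- that a minimax module lies in $FD_{\leq 0}$ because its artinian quotient has support in finitely many maximal ideals and hence dimension $\leq 0$ --- is the correct and standard justification.
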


An $R$-module $M$ is said to be weakly Laskerian if the set $\mathrm{Ass}_RM/N$
is finite for each submodule $N$ of $M$.

\begin{cor}\label{lem:3.7'} {\it{Let $M$ be a weakly Laskerian $R$-module with $\mathrm{Supp}_RM\subseteq\mathrm{V}(\mathfrak{a})$. Then
 $M$ is $\mathfrak{a}$-cofinite if and only if $\mathrm{Hom}_R(R/\mathfrak{a},M)$ and $\mathrm{Ext}^1_R(R/\mathfrak{a},M)$ are finitely generated.}}
\end{cor}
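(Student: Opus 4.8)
The ``only if'' implication is immediate from the definition of $\mathfrak{a}$-cofiniteness, so I concentrate on ``if''. The plan is to reduce everything to Proposition~\ref{lem:3.6} with $n=1$ by producing a finitely generated submodule $N\subseteq M$ with $\dim_R(M/N)\leq1$, i.e.\ by showing $M\in FD_{\leq1}$. Granting such an $N$: the short exact sequence $0\to N\to M\to M/N\to0$, together with the finite generation of $N$ (hence of every $\mathrm{Ext}^i_R(R/\mathfrak{a},N)$) and the hypotheses on $M$, forces $\mathrm{Hom}_R(R/\mathfrak{a},M/N)$ and $\mathrm{Ext}^1_R(R/\mathfrak{a},M/N)$ to be finitely generated; since moreover $\mathrm{Supp}_R(M/N)\subseteq\mathrm{V}(\mathfrak{a})$ and $M/N\in FD_{\leq1}$, Proposition~\ref{lem:3.6} shows $M/N$ is $\mathfrak{a}$-cofinite, while $N$ is $\mathfrak{a}$-cofinite simply for being finitely generated with support in $\mathrm{V}(\mathfrak{a})$; the long exact $\mathrm{Ext}_R(R/\mathfrak{a},-)$-sequence then gives that $M$ is $\mathfrak{a}$-cofinite.

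So the whole content is the assertion $M\in FD_{\leq1}$. First, $\mathrm{Supp}_RM\subseteq\mathrm{V}(\mathfrak{a})$ makes $M$ an $\mathfrak{a}$-torsion module, so $M=\bigcup_{n\geq1}(0:_M\mathfrak{a}^n)$; and since $(0:_M\mathfrak{a})=\mathrm{Hom}_R(R/\mathfrak{a},M)$ is finitely generated, multiplication embeds each successive quotient $(0:_M\mathfrak{a}^{n})/(0:_M\mathfrak{a}^{n-1})$ into the finitely generated $R/\mathfrak{a}$-module $\mathrm{Hom}_{R/\mathfrak{a}}(\mathfrak{a}^{n-1}/\mathfrak{a}^{n},(0:_M\mathfrak{a}))$, so each $(0:_M\mathfrak{a}^n)$ is finitely generated. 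It is therefore enough to find an $n$ with $\dim_R\bigl(M/(0:_M\mathfrak{a}^n)\bigr)\leq1$, since then $N=(0:_M\mathfrak{a}^n)$ does the job.

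To produce such an $n$ I would exploit weak Laskerian-ness, after first recording that $\mathrm{Supp}_RM=\mathrm{Supp}_R\mathrm{Hom}_R(R/\mathfrak{a},M)$ — for $\mathfrak{p}\in\mathrm{Supp}_RM$ every prime in $\mathrm{Ass}_{R_\mathfrak{p}}M_\mathfrak{p}$ contains $\mathfrak{a}R_\mathfrak{p}$ and hence yields a nonzero element of $M_\mathfrak{p}$ annihilated by $\mathfrak{a}R_\mathfrak{p}$ — so that $\mathrm{Supp}_RM$ is closed. If $\dim_R(M/(0:_M\mathfrak{a}^n))\geq2$ for every $n$, then, as $M=\bigcup_n(0:_M\mathfrak{a}^n)$ with finitely generated subquotients, infinitely many of the finitely generated modules $(0:_M\mathfrak{a}^{n})/(0:_M\mathfrak{a}^{n-1})$ have dimension $\geq2$; one then patches the associated $2$-dimensional primes together, along the tower $\{(0:_M\mathfrak{a}^n)\}_n$, into a single submodule of $M$ whose quotient has infinitely many associated primes, contradicting the weakly Laskerian hypothesis. (Equivalently, one may appeal directly to the structural fact implicit in the methods of \cite{BN}, that a weakly Laskerian $\mathfrak{a}$-torsion module $M$ with $\mathrm{Hom}_R(R/\mathfrak{a},M)$ finitely generated lies in $FD_{\leq1}$, and then quote Proposition~\ref{lem:3.6}.)

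I expect this last step — extracting $M\in FD_{\leq1}$ from weak Laskerian-ness plus finite generation of $\mathrm{Hom}_R(R/\mathfrak{a},M)$ — to be the main obstacle, since the set of $2$-dimensional primes of the closed set $\mathrm{Supp}_RM$ may well be infinite; it is precisely the finiteness of $\mathrm{Hom}_R(R/\mathfrak{a},M)$ that pins the non-finitely-generated part of $M$ to dimension at most $1$, and making this precise will require the kind of prime-avoidance bookkeeping already used in the proof of Theorem~\ref{lem:2.1}.
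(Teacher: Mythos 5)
Your overall reduction is exactly the paper's: once $M\in FD_{\leq 1}$ is known, Proposition~\ref{lem:3.6} with $n=1$ gives the conclusion (in fact the proposition applies to $M$ directly, so the passage to $M/N$ need not be redone by hand). The difficulty is that you have not actually established $M\in FD_{\leq 1}$, and the argument you sketch for it does not close. The paper disposes of this step in one line by citing \cite[Theorem 3.3]{KB}, which asserts \emph{unconditionally} that every weakly Laskerian module is $FD_{\leq 1}$; no $\mathfrak{a}$-torsion hypothesis and no finite generation of $\mathrm{Hom}_R(R/\mathfrak{a},M)$ are needed for that structural fact, so the torsion filtration you set up is not the intended route.

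Concretely, the gap is in the ``patching'' step. Granting that each $(0:_M\mathfrak{a}^n)$ is finitely generated (that part of your argument is fine and standard), you assume $\dim_R\bigl(M/(0:_M\mathfrak{a}^n)\bigr)\geq 2$ for every $n$ and hope to contradict weak Laskerian-ness by exhibiting a single submodule $N\subseteq M$ with $\mathrm{Ass}_R(M/N)$ infinite. But the assumption only yields, for each $n$ separately, some prime $\mathfrak{p}_n\in\mathrm{Ass}_R\bigl(M/(0:_M\mathfrak{a}^n)\bigr)$ with $\dim R/\mathfrak{p}_n\geq 2$, and nothing forces these primes to be distinct as $n$ varies: the successive layers $(0:_M\mathfrak{a}^n)/(0:_M\mathfrak{a}^{n-1})$ may all be supported at the same two-dimensional prime. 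Weak Laskerian-ness bounds $\mathrm{Ass}$ of each individual quotient but gives no uniform control along the tower, so no quotient with infinitely many associated primes is produced and no contradiction is reached. Your parenthetical fallback --- appealing to ``the structural fact implicit in the methods of \cite{BN}'' --- is precisely the missing lemma rather than a proof of it. The fix is to cite (or reprove) \cite[Theorem 3.3]{KB} and then conclude with Proposition~\ref{lem:3.6} as you do.
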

\begin{proof}  This follows from \cite[Theorem 3.3]{KB} that $M$ is $FD_{\leq 1}$.
\end{proof}

\begin{cor}\label{lem:3.7}{\it{Let $n=0,1$, and let $M$ be a non-zero finitely generated $R$-module and $t$ a non-negative integer such that $\mathrm{H}^i_\mathfrak{a}(M)$ are $FD_{\leq n}$ for all $i<t$. Then

$(1)$ $\mathrm{H}^i_\mathfrak{a}(M)$ are $\mathfrak{a}$-cofinite for all $i=0,\cdots,t-1$;

$(2)$ $\mathrm{Ext}^j_R(R/\mathfrak{a},\mathrm{H}^t_\mathfrak{a}(M))$ are finitely generated for all $j\leq n$.}}
\end{cor}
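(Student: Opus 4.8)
The plan is to extract both assertions from the standard spectral sequence relating local cohomology to $\mathrm{Ext}$. Pick an injective resolution $M\to I^{\bullet}$. Each $\Gamma_{\mathfrak{a}}(I^{j})$ is a direct summand of $I^{j}$, hence injective, so $\Gamma_{\mathfrak{a}}(I^{\bullet})$ is a complex of injectives with cohomology $\mathrm{H}^{q}_{\mathfrak{a}}(M)$; applying $\mathrm{Hom}_{R}(R/\mathfrak{a},-)$ to it produces a first--quadrant cohomological spectral sequence
\[
E_{2}^{p,q}=\mathrm{Ext}^{p}_{R}(R/\mathfrak{a},\mathrm{H}^{q}_{\mathfrak{a}}(M))\ \Longrightarrow\ \mathrm{Ext}^{p+q}_{R}(R/\mathfrak{a},M),
\]
since $\mathrm{Hom}_{R}(R/\mathfrak{a},\Gamma_{\mathfrak{a}}(-))=\mathrm{Hom}_{R}(R/\mathfrak{a},-)$ and $\Gamma_{\mathfrak{a}}$ carries injectives to $\mathrm{Hom}_{R}(R/\mathfrak{a},-)$-acyclics. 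Because $M$ is finitely generated and $R$ is noetherian, every abutment term $\mathrm{Ext}^{p+q}_{R}(R/\mathfrak{a},M)$ is finitely generated, hence so is every $E_{\infty}^{p,q}$. It then suffices to prove, by induction on $q\leq t$, that $E_{2}^{p,q}$ is finitely generated for all $p\leq n$: the case $q=t$ is exactly $(2)$, while for $q<t$ the $\mathfrak{a}$-torsion module $\mathrm{H}^{q}_{\mathfrak{a}}(M)$ is $FD_{\leq n}$, so Proposition~\ref{lem:3.6} converts ``$\mathrm{Ext}^{p}_{R}(R/\mathfrak{a},\mathrm{H}^{q}_{\mathfrak{a}}(M))$ finitely generated for $p\leq n$'' into ``$\mathrm{H}^{q}_{\mathfrak{a}}(M)$ is $\mathfrak{a}$-cofinite'', which is $(1)$ and also upgrades the inductive statement to ``$E_{2}^{p,q}$ finitely generated for all $p$'' whenever $q<t$.

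For $q=0$ the module $\mathrm{H}^{0}_{\mathfrak{a}}(M)$ is a submodule of $M$, hence finitely generated, and $E_{2}^{p,0}$ is finitely generated for all $p$. For $1\leq q\leq t$, assuming the upgraded statement for all $q'<q$, the key point is that $n\leq1$ forces $p-r<0$ for every $r\geq2$ and every $p\leq n$, so the incoming differential $d_{r}\colon E_{r}^{p-r,\,q+r-1}\to E_{r}^{p,q}$ vanishes and $E_{r+1}^{p,q}=\ker\!\big(E_{r}^{p,q}\to E_{r}^{p+r,\,q-r+1}\big)$ is a submodule of $E_{r}^{p,q}$. Its target $E_{r}^{p+r,\,q-r+1}$, a subquotient of $E_{2}^{p+r,\,q-r+1}$ with second index $q-r+1<q$, is finitely generated by the inductive hypothesis, so each $E_{r}^{p,q}$ sits in a short exact sequence $0\to E_{r+1}^{p,q}\to E_{r}^{p,q}\to C_{r}\to 0$ with $C_{r}$ finitely generated. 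Since the spectral sequence is first--quadrant, $E_{r}^{p,q}=E_{\infty}^{p,q}$ for $r\gg 0$, and $E_{\infty}^{p,q}$ is finitely generated; descending on $r$ --- each $E_{r}^{p,q}$ being an extension of the finitely generated $C_{r}$ by the finitely generated $E_{r+1}^{p,q}$ --- yields that $E_{2}^{p,q}$ is finitely generated, which closes the induction.

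The one genuinely delicate step is the finite--generation bookkeeping across the pages of the spectral sequence, and this is precisely where $n\leq1$ is used in an essential way: it guarantees that no differential enters the columns $p\leq n$, so that running down from $E_{\infty}$ to $E_{2}$ only involves extensions of finitely generated modules. For $n=2$ one would meet the differential $d_{2}\colon E_{2}^{0,q+1}\to E_{2}^{2,q}$ emanating from the row $q+1$, which has not yet been brought under control, and the descent would break down --- which is why the statement is restricted to $n=0,1$. Everything else (the existence of the spectral sequence, finiteness of $\mathrm{Ext}^{\ast}_{R}(R/\mathfrak{a},M)$, and the $\mathrm{Ext}$-criterion of Proposition~\ref{lem:3.6}) is routine.
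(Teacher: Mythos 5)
Your argument is correct, and it shares the paper's overall skeleton --- an induction that alternates between ``$\mathrm{Ext}^j_R(R/\mathfrak{a},\mathrm{H}^q_\mathfrak{a}(M))$ finitely generated for $j\leq n$'' and ``$\mathrm{H}^q_\mathfrak{a}(M)$ is $\mathfrak{a}$-cofinite'' via Proposition \ref{lem:3.6} --- but it differs in how the first of these two implications is obtained. The paper inducts on $t$ and outsources the key step (lower local cohomologies cofinite $\Rightarrow$ $\mathrm{Ext}^j_R(R/\mathfrak{a},\mathrm{H}^t_\mathfrak{a}(M))$ finitely generated for $j\leq n$) to the citation [BA, Theorem 2.9], whereas you prove that step directly from the Grothendieck spectral sequence $E_2^{p,q}=\mathrm{Ext}^p_R(R/\mathfrak{a},\mathrm{H}^q_\mathfrak{a}(M))\Rightarrow\mathrm{Ext}^{p+q}_R(R/\mathfrak{a},M)$, using the observation that for $p\leq n\leq 1$ no differential enters position $(p,q)$, so $E_2^{p,q}$ is recovered from the finitely generated $E_\infty^{p,q}$ by finitely many extensions with finitely generated cokernels controlled by the rows $q'<q$. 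Your descent through the pages is carried out correctly, and your closing remark correctly identifies why the method (and the corollary as stated) stops at $n=1$: for $p=2$ the incoming differential $d_2\colon E_2^{0,q+1}\to E_2^{2,q}$ emanates from a row not yet under control. What your route buys is self-containedness --- the black box [BA, Theorem 2.9] is replaced by a standard and checkable spectral-sequence computation --- at the cost of a somewhat longer write-up; the paper's version is shorter but only as reliable as the cited theorem.
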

\begin{proof} We use induction on $t$.
If $t=1$ then $\mathrm{Ext}^j_R(R/\mathfrak{a},\mathrm{H}^0_\mathfrak{a}(M))$ are finitely generated for all $j\leq n$ by \cite[Theorem 2.9]{BA}. Hence $\mathrm{H}^0_\mathfrak{a}(M)$ is $\mathfrak{a}$-cofinite by Proposition \ref{lem:3.6} and $\mathrm{Ext}^j_R(R/\mathfrak{a},\mathrm{H}^1_\mathfrak{a}(M))$ are finitely generated for all $j\leq n$ by \cite[Theorem 2.9]{BA}.
  Now suppose that $t>1$ and that the case $t-1$ is settled.
 The induction implies that $\mathrm{H}^i_\mathfrak{a}(X)$ is $\mathfrak{a}$-cofinite for $i<t-1$, and so $\mathrm{Ext}^j_R(R/\mathfrak{a},\mathrm{H}^{t-1}_\mathfrak{a}(M))$ are finitely generated for all $j\leq n$ by \cite[Theorem 2.9]{BA}. Consequently, $\mathrm{H}^{t-1}_\mathfrak{a}(X)$ is $\mathfrak{a}$-cofinite by Proposition \ref{lem:3.6} and $\mathrm{Ext}^j_R(R/\mathfrak{a},\mathrm{H}^t_\mathfrak{a}(M))$ are finitely generated for all $j\leq n$ by \cite[Theorem 2.9]{BA} again.
\end{proof}

\bigskip \centerline {\bf ACKNOWLEDGEMENTS} This research was partially supported by National Natural Science Foundation of China (11761060,11901463).

\bigskip

\end{document}